\theoremstyle{plain} 
\newtheorem{theorem}{\indent\sc Theorem}[section]
\newtheorem{corollary}[theorem]{\indent\sc Corollary}
\newtheorem{prop}[theorem]{\indent\sc Proposition}
\theoremstyle{definition} 
\newtheorem{defn}[theorem]{\indent\sc Definition}
\newtheorem{rem}[theorem]{\indent\sc Remark}
\newtheorem{example}[theorem]{\indent\sc Example}
\def\address#1#2{\begingroup
\noindent\parbox[t]{7.8cm}{%
\small{\scshape\ignorespaces#1}\par\vskip1ex
\noindent\small{\itshape E-mail address}%
\/: #2\par\vskip4ex}\hfill%
\endgroup}%
\title{\uppercase{Jacobi-Koszul-Vinberg structures on Jacobi-left-symmetric algebroids}} 
\author{
%
\bigskip \\
\textsc{Naoki Kimura and Tomoya Nakamura$^{*}$} 
}
\date{} 
\begin{document}

\maketitle

\footnote{ 
2020 \textit{Mathematics Subject Classification}.
Primary 53D17; Secondary 53B12, 53B05, 17D25.
}
\footnote{ 
\textit{Key words and phrases}.
Poisson manifold, Hessian manifold, Koszul-Vinberg manifold, Lie algebroid, left-symmetric algebroid, Jacobi algebroid, Jacobi manifold.
}
\footnote{ 
$^{*}$The second author was partially supported by JSPS KAKENHI Grant Number JP23K12977.
}

\begin{abstract}
A Koszul-Vinberg manifold is a generalization of a Hessian manifold, and their relation is similar to the relation between Poisson manifolds and symplectic manifolds.  Koszul-Vinberg structures and Poisson structures on manifolds extend to the structures on algebroids.  
A Koszul-Vinberg structure on a left-symmetric algebroid is regarded as a symmetric analogue of a Poisson structure on a Lie algebroid.  
We define a Jacobi-Koszul-Vinberg structure on a Jacobi-left-symmetric algebroid as a symmetric analogue of a Jacobi structure on a Jacobi algebroid.  We show some properties of Jacobi-Koszul-Vinberg structures on Jacobi algebroids and give the definition of a Jacobi-Koszul-Vinberg manifold, which is a symmetric analogue of a Jacobi manifold.


  \end{abstract}




\section*{Introduction}
A Poisson manifold is a generalization of a symplectic manifold and a non-degenerate Poisson manifold is equivalent to a symplectic manifold.  A Jacobi manifold, which is also a generalization of a contact manifold, is a further generalization of a Poisson manifold.  Poisson and Jacobi structures on manifolds extend to the structures on Lie and Jacobi algebroids respectively.  
These structures are defined as skew-symmetric tensor fields on algebroids.  A Poisson structure on a Lie algebroid $A$ induces a Lie algebroid structure on its dual bundle $A^*$.  
Similarly, a Jacobi structure on a Jacobi algebroid induces a Jacobi algebroid structure on its dual bundle.  
The Poissonization of a Jacobi structure on a Jacobi algebroid $(A,\phi_0)$ yields a Poisson structure on a Lie algebroid $A\times \mathbb{R}$.  The Poissonization of a Jacobi structure is a generalization of the symplectization of a contact structure. \\
A Koszul-Vinberg manifold was introduced in \cite{BB2} (it is called a contravariant pseudo-Hessian manifold in \cite{BB2}) as a generalization of a (pseudo-)Hessian manifold. Their relation is similar to the relation between Poisson manifolds and symplectic manifolds.  A non-degenerate Koszul-Vinberg manifold is equivalent to a (pseudo-)Hessian manifold.  Koszul-Vinberg structures on manifolds also extend to the structures on left-symmetric algebroids.  
A Koszul-Vinberg structure on a left-symmetric algebroid is defined as a symmetric tensor field, and is regarded as a symmetric analogue of a Poisson structure on a Lie algebroid. 
Liu, Sheng and Bai proved in \cite{LSB1} that a Koszul-Vinberg structure on a left-symmetric algebroid $A$ induces a left-symmetric algebroid structure on its dual bundle $A^*$.  Besides this, there are other studies on Koszul-Vinberg structures in terms of analogy of Poisson geometry such as \cite{ABB1}, \cite{ABB2}, \cite{BB2} and \cite{WLS}. \\ 
In this paper, we define a Jacobi-left-symmetric algebroid and a Jacobi-Koszul-Vinberg structure on a Jacobi-left-symmetric algebroid as a symmetric analogue of a Jacobi structure on a Jacobi algebroid.  We show that a Jacobi-Koszul-Vinberg structure on a Jacobi-left-symmetric algebroid $A$ induces a Jacobi-left-symmetric algebroid structure on its dual bundle $A^*$.  We also define the Koszul-Vinbergization of a Jacobi-Koszul-Vinberg structure as an analogy of the Poissonization of a Jacobi structure.  Furthermore, we give the definition of a Jacobi-Koszul-Vinberg manifold, which is a symmetric analogue of a Jacobi manifold.  Matsuzoe introduced in \cite{Mat} a semi-Weyl manifold as a generalization of both a statistical manifold and a Weyl manifold.  We prove that a Jacobi-Koszul-Vinberg manifold with non-degenerate symmetric $(2,0)$-tensor field $h$ is a semi-Weyl manifold.  Moreover, we also show that such a Jacobi-Koszul-Vinberg manifold is a locally conformally Hessian manifold defined by Osipov in \cite{Osi}. \\
This paper is organized as follows.  In Section 2, we recall the definitions and properties of several notions such as Lie algebroids, Poisson structures, left-symmetric algebroids and Koszul-Vinberg structures.  We explain similarities between Poisson structures on Lie algebroids and Koszul-Vinberg structures on left-symmetric algebroids.  We also review the definitions of Jacobi algebroids and Jacobi structures, which are generalizations of Lie algebroids and Poisson structures respectively.  We describe the Poissonization of a Jacobi structure at the end of Section 2.  
In Section 3, we introduce a Jacobi-left-symmetric algebroid as a generalization of a left-symmetric algebroid.  We define a Jacobi-Koszul-Vinberg structure on a Jacobi-left-symmetric algebroid as a generalization of a Koszul-Vinberg structure on a left-symmetric algebroid.  We show that a Jacobi-Koszul-Vinberg structure on a Jacobi-left-symmetric algebroid $A$ induces a Jacobi-left-symmetric algebroid structure on its dual bundle $A^*$.  We also define the Koszul-Vinbergization of a Jacobi-Koszul-Vinberg structure as an analogy of the Poissonization of a Jacobi structure.  At the end of Section 3, we give the definition of a Jacobi-Koszul-Vinberg manifold, which is considered as a symmetric analogue of a Jacobi manifold.  A Jacobi-Koszul-Vinberg manifold with non-degenerate symmetric $(2,0)$-tensor field $h$ is a semi-Weyl manifold.  Furthermore, we show that such a Jacobi-Koszul-Vinberg manifold is also a locally conformally Hessian manifold. 

\section{Preliminaries}
In this section, we recall the definitions and properties of Lie, left-symmetric and Jacobi algebroids, as well as Poisson, Koszul-Vinberg and Jacobi structures on algebroids. 

\subsection{Lie algebroids and Poisson structures}
A {\it Lie algebroid} over a manifold $M$ is a vector bundle
$A\rightarrow M$ equipped with a Lie bracket 
$[\cdot,\cdot]_{A}:\Gamma (A)\times \Gamma(A)\rightarrow \Gamma (A)$ 
and a bundle map $\rho_{A}:A\rightarrow TM$ over $M$, called the {\it anchor}, satisfying the following condition: for any $X,Y$ in $\Gamma (A)$ and $f$ in $C^{\infty }(M)$,
 \begin{align*}
[X,fY]_{A}=f[X,Y]_{A}+(\rho_{A}(X)f)Y,
 \end{align*}
where we denote the map $\Gamma(A)\rightarrow \Gamma(TM)=\mathfrak{X}(M)$ induced by the anchor, the same symbol $\rho_{A}$. 
For any Lie algebroid $(A,[\cdot,\cdot]_A,\rho_A)$ over $M$, it follows that for any $X$ and $Y$ in $\Gamma(A)$,
 \begin{align*}
\rho_A([X,Y]_{A})=[\rho_A(X),\rho_A(Y)],
 \end{align*}
where the bracket on the right hand side is the usual Lie bracket on $\mathfrak{X}(M)$.

\begin{example}

For any manifold $M$, the tangent bundle $(TM,[\cdot,\cdot],\mbox{id}_{TM})$ is a Lie algebroid over $M$, where $[\cdot,\cdot]$ is the usual Lie bracket on the vector fields $\mathfrak{X}(M)=\Gamma(TM)$. 
%
\end{example}

\begin{example}\label{A oplus R}
Let $A$ be a Lie algebroid over a manifold $M$ and set $A\oplus \mathbb{R}:=A\oplus(M\times \mathbb{R})$. 
Then $(A\oplus \mathbb{R},[\cdot,\cdot]_{A\oplus\mathbb{R}},\rho_{A}\circ\mbox{pr}_1)$ is also a Lie algebroid over $M$, where the bracket $[\cdot,\cdot]_{A\oplus\mathbb{R}}$ is defined by
\begin{align*}\label{A oplus R no kakko}
[(X,f),(Y,g)]_{A\oplus\mathbb{R}}&:=([X,Y]_{A},\rho_{A}(X)g-\rho_{A}(Y)f)
\end{align*}
for any $(X,f),(Y,g)$ in $\Gamma(A\oplus \mathbb{R})\cong \Gamma(A)\times C^\infty(M)$ and the map $\mbox{pr}_1:A\oplus \mathbb{R}\rightarrow A$ is the canonical projection to the first factor. 
\end{example}

Let $(A,[\cdot,\cdot]_A,\rho_A)$ be a Lie algebroid over $M$. The {\it Schouten bracket} on $\Gamma (\Lambda ^*A)$ is defined similarly to the Schouten bracket $[\cdot ,\cdot ]$ on the multivector fields $\mathfrak{X}^*(M)$ and is denoted by the same symbol $[\cdot,\cdot]_A$ as the bracket on $\Gamma (A)$. 
The {\it differential} of the Lie algebroid $A$ is an operator $d_A:\Gamma (\Lambda ^k A^*)\rightarrow \Gamma (\Lambda ^{k+1} A^*)$ defined by, for any $\omega $ in $\Gamma (\Lambda^k A^*)$ and $X_0,\dots ,X_k$ in $\Gamma(A)$,
\begin{align*}
(d_A \omega)(X_0, \dots ,X_k)&=\sum_{i=0}^k(-1)^i\rho_A (X_i)(\omega(X_0,\dots ,\hat{X_i},\dots ,X_k))\nonumber \\
&\quad+\sum _{i<j}^{}(-1)^{i+j}\omega ([X_i,X_j]_A , X_0,\dots ,\hat{X_i},\dots ,\hat{X_j},\dots ,X_k).
\end{align*}
The differential $d_A$ satisfies $d_A^2=0$. 
For any $X$ in $\Gamma (A)$, the {\it Lie derivative\index{Lie derivative}} $\mathcal{L}_X^A:\Gamma (\Lambda ^k A^*)\rightarrow \Gamma (\Lambda ^k A^*)$ is defined by the {\it Cartan formula} $\mathcal{L}_X^A:=d_A \iota _X +\iota _X d_A$ and $\mathcal{L}_X^A$ is extended on $\Gamma (\Lambda ^*A)$ in the same way as the usual Lie derivative $\mathcal{L}_X$. Then it follows that $\mathcal{L}_X^AD=[X,D]_A$ for any $D$ in $\Gamma (\Lambda^{*}A)$. We call a $d_{A}$-closed $2$-cosection $\omega$ in $\Gamma (\Lambda^2 A^*)$, i.e., $d_{A}\omega=0$, a {\it presymplectic structure} on $(A,[\cdot,\cdot]_A,\rho_A)$. A presymplectic structure $\omega$ is called a {\it symplectic structure} if $\omega$ is non-degenerate.


{\it A Poisson structure} on a Lie algebroid $A$ over a manifold $M$ is a $2$-section $\pi$ in $\Gamma(\Lambda^2 A)$ satisfying $[\pi,\pi]_A=0$. For any $2$-section $\pi $ in $\Gamma (\Lambda^2A)$, we define a skew-symmetric bilinear bracket $[\cdot,\cdot]_{\pi} $ on $\Gamma (A^*)$ by for any $\xi,\eta$ in $\Gamma (A^*)$,
\begin{align*}
[\xi,\eta]_{\pi}:=\mathcal{L}_{\pi^\sharp \xi}^{A}\eta -\mathcal{L}_{\pi^\sharp \eta}^{A}\xi -d_{A}\langle \pi^\sharp \xi,\eta \rangle,
\end{align*}
where a bundle map $\pi^\sharp:A^{*}\rightarrow A$ over $M$ is defined by $\langle \pi^\sharp \xi,\eta\rangle:=\pi(\xi,\eta)$. 
Moreover it follows that
 \begin{align*}
\frac{1}{2}[\pi ,\pi ]_{A}(\xi,\eta,\cdot)=[\pi^\sharp \xi,\pi^\sharp \eta ]_{A}-\pi^\sharp[\xi,\eta]_{\pi}.
\end{align*}
A triple $A^*_\pi:=(A^*,[\cdot,\cdot]_\pi,\rho_\pi)$, where $\rho_\pi:=\rho_A\circ\pi^\sharp$, is a Lie algebroid if and only if $\pi$ is Poisson on $(A, [\cdot,\cdot]_A,\rho_A)$. 

It is well known that there exists a one-to-one correspondence between symplectic structures and non-degenerate Poisson structures on a Lie algebroid $(A, [\cdot,\cdot]_A,\rho_A)$. In fact, for a non-degenerate Poisson structure $\pi$, a $2$-cosection $\omega_{\pi}$ characterized by $\omega_{\pi}^{\flat}=-(\pi^{\sharp})^{-1}$ is symplectic, where for any $2$-cosection $\Omega$, a bundle map $\Omega^\flat:A\rightarrow A^{*}$ over $M$ is defined by $\langle \Omega^\flat X,Y\rangle:=\Omega(X,Y)$ for any $X$ and $Y$ in $\Gamma(A)$.

An {\it affine connection} on a Lie algebroid $(A, [\cdot,\cdot]_{A}, \rho_{A})$ over $M$ is an $\mathbb{R}$-bilinear map 
$\nabla: \Gamma(A)\times \Gamma(A)\rightarrow \Gamma(A)$ satisfying 
for any $f \in C^{\infty}(M)$ and $X ,Y \in \Gamma(A)$, 
\begin{align*}
\nabla_{fX}Y&=f\nabla_X Y, \\
\nabla_X (fY)&=f\nabla_X Y+(\rho_A(X)f)Y.
\end{align*}
An affine connection on the standard Lie algebroid $(TM,[\cdot,\cdot],\mbox{id}_{TM})$ is just an affine connection on $M$.  An affine connection $\nabla$ on a Lie algebroid $(A, [\cdot,\cdot]_{A}, \rho_{A})$ over $M$
 is {\it torsion-free} if 
\begin{equation*}
\nabla_X Y -\nabla_Y X=[X,Y]_A,
\end{equation*}
and is {\it flat} if 
\begin{equation*}
\nabla_X\nabla_Y Z -\nabla_Y\nabla_X Z=\nabla_{[X,Y]_A}Z,
\end{equation*}
for any $X,Y$ and $Z \in \Gamma(A)$.

\subsection{Left-symmetric algebroids and Koszul-Vinberg structures}

A {\it statistical manifold} $(M, g, \nabla)$ is a (pseudo-)Riemannian manifold $(M,g)$ equipped with a torsion-free affine connection $\nabla$ on $M$
such that $\nabla g$ is symmetric, i.e., 
for any $X,Y,Z$ in $\mathfrak{X}(M)$,
\begin{equation}\label{Cod}
(\nabla_X g)(Y,Z)=(\nabla_Y g)(X,Z).
\end{equation}
The equation (\ref{Cod}) is called the {\it Codazzi equation}.

Let $(M, g)$ be a (pseudo-)Riemannian manifold and $\nabla$ an affine connection on $M$.  
The {\it dual connection} $\nabla^*$ of $\nabla$ with respect to $g$ is defined by, for $X,Y,Z$ in $\mathfrak{X}(M)$,
\begin{equation*}
X (g(Y,Z))=g(\nabla_X Y,Z)+g(Y,\nabla^*_X Z). 
\end{equation*}
It is known that for a torsion-free connection $\nabla$, 
the dual connection $\nabla^*$ is torsion-free if and only if $\nabla g$ is symmetric.  Hence a statistical manifold is defined also as the quadruple $(M, g, \nabla, \nabla^*)$ such that both $\nabla$ and $\nabla^*$ are torsion-free. 

For a statistical manifold $(M, g, \nabla)$, 
the connection $\nabla$ is flat if and only if the dual connection $\nabla^*$ is flat.  
Therefore, a statistical manifold $(M, g, \nabla)$ with the flat connection $\nabla$ is called a {\it dually flat manifold}.  
A dually flat manifold is also called a {\it Hessian manifold} since it possesses a local coordinate system in which the metric $g$ is described as the Hessian matrix of some local function with respect to the local coordinates. 

An {\it affine manifold} $(M, \nabla)$ is a manifold $M$ with a torsion-free flat connection $\nabla$ on $M$.  Hence a Hessian manifold $(M, g, \nabla)$ is an affine manifold (M,$\nabla$) with a (pseudo-)Riemannian metric $g$ satisfying the Codazzi equation (\ref{Cod}).  For more comprehensive study on Hessian manifolds, see \cite{Shi}.

\begin{defn}(\cite[Definition 6.6]{BB2})
A {\it Koszul-Vinberg structure} on an affine manifold $(M, \nabla)$ is a symmetric $(2,0)$-tensor field $h$ in $\Gamma (S^2TM)$ satisfying, for any $\alpha,\beta,\gamma$ in $\Omega^1(M)$,
\begin{equation}\label{hCod}
(\nabla_{h^{\sharp}\alpha} h)(\beta,\gamma)=(\nabla_{h^{\sharp}\beta} h)(\alpha,\gamma). 
\end{equation}
$(M, \nabla,h)$ is called a {\it Koszul-Vinberg manifold}.  The equation (\ref{hCod}) is also called the Codazzi equation.
\end{defn}
\noindent Note that a Koszul-Vinberg structure is called a contravariant pseudo-Hessian structure in \cite{BB2}.  

There exists a one-to-one correspondence between the non-degenerate symmetric $(0,2)$-tensor fields on $M$ and the non-degenerate symmetric $(2,0)$-tensor fields on $M$.  
For a non-degenerate symmetric $(2,0)$-tensor $h$, let $g$ be the corresponding non-degenerate symmetric $(0,2)$-tensor, that is, $g^{\flat}= (h^{\sharp})^{-1}$.  
Then we can easily check that the non-degenerate symmetric $(0,2)$-tensor $h$ satisfies the Codazzi equation (\ref{hCod}) if and only if $\nabla g$ is symmetric.  
Therefore, a non-degenerate Koszul-Vinberg manifold is equivalent to a (pseudo-)Hessian manifold.

An algebra $(A, \cdot)$ is called a {\it left-symmetric algebra} if it satisfies for any $x,y,z$ in $A$,
\begin{equation*}
(x,y,z)=(y,x,z),  
\end{equation*}
where $(x,y,z):=(x\cdot y)\cdot z-x\cdot(y\cdot z)$.  
A left-symmetric algebra is also called a {\it Koszul-Vinberg algebra}.

\begin{example}
An associative algebra is a left-symmetric algebra.
\end{example}
\begin{example}
Let $(M, \nabla)$ be an affine manifold.  
Then $(\mathfrak{X}(M), \cdot)$ is a left-symmetric algebra, where 
$X\cdot Y :=\nabla_X Y$ for $X,Y$ in $\mathfrak{X}(M)$.
\end{example}
A left-symmetric algebra $(A,\cdot)$ is Lie-admissible, i.e., $(A, [\cdot,\cdot])$ is a Lie algebra, where
$[x,y]:=x\cdot y-y\cdot x$ for $x,y$ in $A$.  
Hence a left-symmetric algebra is also called a {\it pre-Lie algebra}.

\begin{defn}(\cite[Definition 2.1]{Boy1})
A {\it left-symmetric algebroid} over a manifold $M$ is a vector bundle
$A\rightarrow M$ equipped with a left-symmetric algebra structure $\cdot_A$ on $\Gamma (A)$ and 
a bundle map $\rho_{A}:A\rightarrow TM$ over $M$, called the anchor, satisfying the following conditions$:$ for any $X,Y$ in $\Gamma (A)$ and $f$ in $C^{\infty }(M)$,
 \begin{align}
\label{left-Leib}
X\cdot_A (fY)&=f(X\cdot_A Y)+(\rho_{A}(X)f)Y, \\
\label{right-lin}
(fX)\cdot_A Y&=f(X\cdot_A Y). 
\end{align}
 \end{defn}
\noindent Note that a left-symmetric algebroid is called a Koszul-Vinberg algebroid in \cite{Boy1}. 

For a left-symmetric algebroid $(A,\cdot_A, \rho_A)$ over a manifold $M$, $(A, [\cdot,\cdot]_A, \rho_A)$ is a Lie algebroid over $M$, where $[X,Y]_A :=X\cdot_A Y-Y\cdot_A X$ for $X,Y$ in $\Gamma (A)$.  
$(A, [\cdot,\cdot]_A, \rho_A)$ is called the {\it sub-adjacent Lie algebroid} of the left-symmetric algebroid $(A,\cdot_A, \rho_A)$.

\begin{example}
Let $(A, [\cdot,\cdot]_A, \rho_A)$ be a Lie algebroid over $M$ and $\nabla$ a torsion-free flat connection on $(A, [\cdot,\cdot]_A, \rho_A)$.  Then $(A,\cdot_A, \rho_A)$ is a left-symmetric algebroid over $M$, where $X\cdot_A Y :=\nabla_X Y$ for $X,Y$ in $\Gamma(A)$.  The sub-adjacent Lie algebroid of $(A,\cdot_A, \rho_A)$ is the given Lie algebroid $(A, [\cdot,\cdot]_A, \rho_A)$.  

In particular, for an affine manifold $(M, \nabla)$, $TM_{\nabla}:=(TM,\nabla,\mbox{id}_{TM})$ is a left-symmetric algebroid over $M$.  The sub-adjacent Lie algebroid of $TM_{\nabla}$ is the standard Lie algebroid $(TM,[\cdot,\cdot],\mbox{id}_{TM})$.
\end{example}

\begin{example}\label{barnabla}
Let $(A, [\cdot,\cdot]_A, \rho_A)$ be a Lie algebroid over $M$ and $\nabla$ a torsion-free flat connection on $(A, [\cdot,\cdot]_A, \rho_A)$.  Define $\bar{\nabla}:\Gamma(A\oplus \mathbb{R})\times
\Gamma(A\oplus \mathbb{R})\rightarrow \Gamma(A\oplus \mathbb{R})$ by
\begin{equation*}
\bar{\nabla}_{(X,f)}(Y,g):=(\nabla_X Y, \rho_A(X)g)
\end{equation*}
for $(X,f),(Y,g)$ in $\Gamma(A\oplus \mathbb{R})\cong \Gamma(A)\times C^\infty(M)$.  Then $\bar{\nabla}$ is a torsion-free flat connection on the Lie algebroid $(A\oplus \mathbb{R},[\cdot,\cdot]_{A\oplus\mathbb{R}},\rho_{A}\circ\mbox{pr}_1)$ in Example \ref{A oplus R}.  Hence $(A\oplus \mathbb{R},\bar{\nabla},\rho_{A}\circ\mbox{pr}_1)$ is a left-symmetric algebroid over $M$ and its sub-adjacent Lie algebroid is $(A\oplus \mathbb{R},[\cdot,\cdot]_{A\oplus\mathbb{R}},\rho_{A}\circ\mbox{pr}_1)$.

In particular, for an affine manifold $(M, \nabla)$, $(TM\oplus \mathbb{R},\bar{\nabla},\mbox{pr}_1)$ is a left-symmetric algebroid over $M$.
\end{example}

Let $(A,\cdot_A, \rho_A)$ be a left-symmetric algebroid over a manifold $M$.  
For a symmetric $(2,0)$-tensor $h$ in $\Gamma (S^2A)$, we define $\llbracket h,h \rrbracket_A$ in $\Gamma (\Lambda^2A\otimes A)$ by, for $\alpha,\beta,\gamma$ in $\Gamma (A^*)$,
 \begin{align*}
\llbracket h,h \rrbracket_A (\alpha,\beta, \gamma)=
\rho_{A}&(h^{\sharp}\alpha) h(\beta,\gamma)-\rho_{A}(h^{\sharp}\beta) h(\alpha,\gamma) 
+\langle \alpha, h^{\sharp}\beta \cdot_A h^{\sharp}\gamma \rangle \\
&-\langle \beta, h^{\sharp}\alpha \cdot_A h^{\sharp}\gamma \rangle 
-\langle \gamma, [h^{\sharp}\alpha, h^{\sharp}\beta ]_A \rangle.
\end{align*}
The bracket $\llbracket \cdot,\cdot \rrbracket_A$ is an analogy of 
the Schouten bracket $[\cdot,\cdot]_{A}$ on $\Gamma (\Lambda ^*A)$ for a Lie algebroid $(A, [\cdot,\cdot]_A, \rho_A)$.  
Set $C^{k+1}(A):= \Gamma (\Lambda ^k A^* \otimes A^*)$ for $k\geq 0$.  
The coboundary operator $\delta_A:C^{k}(A)\rightarrow C^{k+1}(A)$ is defined by, for $\omega$ in $C^{k}(A)$ and $X_0,\dots ,X_k$ in $\Gamma(A)$,
\begin{small}
 \begin{align*}
(\delta_A \omega)(X_0, \dots ,X_k)=&\sum_{i=0}^{k-1}(-1)^i\rho_A (X_i)(\omega(X_0,\dots ,\hat{X_i},\dots ,X_k)) \\
&-\sum_{i=0}^{k-1}(-1)^i \omega(X_0,\dots ,\hat{X_i},\dots ,X_{k-1}, X_i \cdot_A X_k) \\
&+\sum _{0\leq i<j\leq k-1} (-1)^{i+j}\omega ([X_i,X_j]_A , X_0,\dots ,\hat{X_i},\dots ,\hat{X_j},\dots ,X_k).
\end{align*}
\end{small}

\begin{defn}(\cite{LSB1})
A Koszul-Vinberg structure on a left-symmetric algebroid $(A,\cdot_A,$ $ \rho_A)$ over a manifold $M$ is a symmetric $(2,0)$-tensor field $h $ in $\Gamma (S^{2}A)$ such that
\begin{equation*}
\llbracket h,h \rrbracket_A =0.
\end{equation*} 
\end{defn}  

For the left-symmetric algebroid $TM_{\nabla}$ and $h$ in $\Gamma (S^2TM)$, 
the condition $\llbracket h,h \rrbracket_{TM_{\nabla}} =0$ is equivalent to the Codazzi equation (\ref{hCod}).  
This means that a Koszul-Vinberg structure on an affine manifold $(M, \nabla)$ is a Koszul-Vinberg structure on the left-symmetric algebroid $TM_{\nabla}$.  
On the other hand, for $g \in \Gamma (S^2 T^*M) \subset C^2(TM)=\Gamma (T^*M\otimes T^*M)$, 
the condition $\delta_{TM_{\nabla}} \: g =0$ is equivalent to that $\nabla g$ is symmetric.  
Recall that for non-degenerate $h$ and the corresponding non-degenerate $g$, that is, $g^{\flat}= (h^{\sharp})^{-1}$, $h$ satisfies the Codazzi equation (\ref{hCod}) if and only if $\nabla g$ is symmetric.  
Hence, if $h$ in $\Gamma (S^2TM)$ is non-degenerate, $\llbracket h,h \rrbracket_{TM_{\nabla}} =0$ is equivalent to $\delta_{TM_{\nabla}} \: g =0$, where $g$ in $\Gamma (S^2T^*M)$ is the corresponding symmetric $(0,2)$-tensor field to $h$.  
This equivalence on the left-symmetric algebroid $TM_{\nabla}$ extends to a general left-symmetric algebroid in the following proposition.

\begin{prop}{\rm (\cite[Proposition 4.7]{LSB1})}
Let $(A,\cdot_A, \rho_A)$ be a left-symmetric algebroid over a manifold $M$.  
If $h$ in $\Gamma (S^2A)$ is non-degenerate, then 
$\llbracket h,h \rrbracket_A =0$ is equivalent to $\delta_A \: g =0$, where $g$ in $\Gamma (S^2A^*)$ is the corresponding section to $h$.
\end{prop}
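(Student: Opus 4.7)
The plan is to show the two conditions are related by the direct substitution $\alpha = g^{\flat}X$, $\beta = g^{\flat}Y$, $\gamma = g^{\flat}Z$. Since $h$ is non-degenerate, $g^{\flat}: A \to A^{*}$ is a bundle isomorphism with inverse $h^{\sharp}$, so it suffices to prove the pointwise identity
\begin{equation*}
\llbracket h,h \rrbracket_A(g^{\flat}X, g^{\flat}Y, g^{\flat}Z) = (\delta_A g)(X,Y,Z)
\end{equation*}
for arbitrary $X,Y,Z$ in $\Gamma(A)$; then $\llbracket h,h \rrbracket_A = 0$ and $\delta_A g = 0$ are equivalent because $g^{\flat}$ is surjective.

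First I would record the two basic identities that drive the computation: $h^{\sharp}(g^{\flat}X) = X$ (from $g^{\flat} = (h^{\sharp})^{-1}$), and $h(g^{\flat}X, g^{\flat}Y) = \langle g^{\flat}X, h^{\sharp}g^{\flat}Y\rangle = \langle g^{\flat}X, Y\rangle = g(X,Y)$. Using symmetry of $g$, each of the six terms in the definition of $\llbracket h,h \rrbracket_A$ converts cleanly: the two anchor terms become $\rho_A(X)g(Y,Z) - \rho_A(Y)g(X,Z)$; the two pairing terms become $g(X, Y\cdot_A Z) - g(Y, X\cdot_A Z)$; and the last term becomes $-g([X,Y]_A, Z)$.

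Next I would expand $(\delta_A g)(X,Y,Z)$ directly from the definition of the coboundary $\delta_A$ applied to $\omega = g \in C^{2}(A)$ with $k = 2$. This yields exactly the five terms above, with matching signs. Thus the identity holds term-by-term.

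The steps are all routine bookkeeping of signs and of how the $(2,0)$-tensor $h$ pairs with $A^{*}$ through $g^{\flat}$; there is no genuine obstacle once the substitution is set up, and the conclusion follows immediately from bijectivity of $g^{\flat}$. The only place to be careful is the third sum in $\delta_A$, whose sign conventions on $[X_i, X_j]_A$ must be tracked so that they align with the $-\langle \gamma, [h^{\sharp}\alpha, h^{\sharp}\beta]_A\rangle$ term in $\llbracket h,h \rrbracket_A$.
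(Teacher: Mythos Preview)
Your argument is correct: the substitution $\alpha=g^{\flat}X$, $\beta=g^{\flat}Y$, $\gamma=g^{\flat}Z$ together with $h^{\sharp}\circ g^{\flat}=\mathrm{id}$ and $h(g^{\flat}X,g^{\flat}Y)=g(X,Y)$ turns the five terms of $\llbracket h,h\rrbracket_A$ into exactly the five terms of $(\delta_A g)(X,Y,Z)$, and bijectivity of $g^{\flat}$ then gives the equivalence. Note, however, that the paper does not supply its own proof of this proposition; it is quoted from \cite{LSB1} (the preceding paragraph only sketches the special case $A=TM_\nabla$ via the Codazzi equation), so there is no in-paper argument to compare your approach against.
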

This equivalence on left-symmetric algebroids is regarded as a symmetric analogue of the correspondence between non-degenerate Poisson structures and symplectic structures on Lie algebroids mentioned at the end of Subsection 2.1. 

As previously stated in Subsection 2.1, a Poisson structure $\pi$ on a Lie algebroid $(A, [\cdot,\cdot]_A,\rho_A)$ induces a Lie algebroid structure on the dual bundle $A^*$, denoted by $A^*_\pi$.  A similar property holds for a Koszul-Vinberg structure on a left-symmetric algebroid in the following way.
\begin{prop}{\rm (\cite[Theorem 4.10]{LSB1})}\label{Thm4.10 in [7]}
Let $(A,\cdot_A, \rho_A)$ be a left-symmetric algebroid over a manifold $M$ and 
$h$ in $\Gamma (S^2A)$ a Koszul-Vinberg structure on $(A,\cdot_A, \rho_A)$.  
Then $(A^*, \cdot^{h^{\sharp}}, \rho_{A} \circ h^{\sharp})$ is a left-symmetric algebroid over $M$, 
where $\cdot^{h^{\sharp}}$ is defined by, for $\alpha, \beta$ in $\Gamma(A^*)$,
\begin{equation}\label{K-V product}
\alpha \cdot^{h^{\sharp}} \beta :=\mathcal{L}^A_{h^\sharp \alpha}\beta -R_{h^\sharp \beta}\alpha -d_A(h (\alpha,\beta)).
\end{equation} 
Here $\mathcal{L}^A_X$ and $d_A$ are the Lie derivative and the differential respectively of the sub-adjacent Lie algebroid of the left-symmetric algebroid $(A,\cdot_A, \rho_A)$.  
The operator $R_X:\Gamma(A^*)\rightarrow \Gamma(A^*)$ is defined by, for $\alpha$ in $\Gamma(A^*)$ and $X,Y$ in $\Gamma(A)$,
\begin{equation*}
\langle R_X \alpha, Y\rangle = -\langle \alpha, Y\cdot_A X\rangle. 
\end{equation*}
\end{prop}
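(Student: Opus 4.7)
The plan is to verify the three axioms of a left-symmetric algebroid for $(A^*, \cdot^{h^{\sharp}}, \rho_A\circ h^{\sharp})$: right $C^\infty(M)$-linearity, the left Leibniz rule, and left symmetry of the associator. Since $h^\sharp$ is a $C^\infty(M)$-linear bundle map, the first two axioms should reduce to the standard Leibniz identities for the operators appearing in (\ref{K-V product}), while left symmetry is where the Koszul-Vinberg hypothesis $\llbracket h,h\rrbracket_A = 0$ enters.

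For right linearity $(f\alpha)\cdot^{h^{\sharp}}\beta = f(\alpha\cdot^{h^{\sharp}}\beta)$, I would use that $\mathcal{L}^A_{fh^\sharp\alpha}\beta = f\mathcal{L}^A_{h^\sharp\alpha}\beta + \langle\beta,h^\sharp\alpha\rangle\, d_A f$ on $\Gamma(A^*)$, that $R_{h^\sharp\beta}$ is $C^\infty(M)$-linear in its argument, and that $d_A(fh(\alpha,\beta)) = f\,d_A h(\alpha,\beta) + h(\alpha,\beta)\,d_A f$; the derivative terms cancel because $\langle\beta,h^\sharp\alpha\rangle = h(\alpha,\beta)$. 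For the left Leibniz rule with anchor $\rho_A\circ h^\sharp$, I would expand $\mathcal{L}^A_{h^\sharp\alpha}(f\beta)$, $R_{fh^\sharp\beta}\alpha$, and $d_A(fh(\alpha,\beta))$ separately; here (\ref{left-Leib}) yields $R_{fh^\sharp\beta}\alpha = fR_{h^\sharp\beta}\alpha - h(\alpha,\beta)\,d_A f$, so the three $d_A f$-contributions again cancel, leaving exactly the anchor term $(\rho_A(h^\sharp\alpha)f)\beta$.

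The main step is left symmetry: to show that
\begin{equation*}
(\alpha\cdot^{h^{\sharp}}\beta)\cdot^{h^{\sharp}}\gamma - \alpha\cdot^{h^{\sharp}}(\beta\cdot^{h^{\sharp}}\gamma)
\end{equation*}
is symmetric under $\alpha\leftrightarrow\beta$. My strategy is to pair this with an arbitrary $X\in\Gamma(A)$, substitute (\ref{K-V product}) twice, reduce every $\mathcal{L}^A$-term via $\langle\mathcal{L}^A_X\beta,Y\rangle = \rho_A(X)\langle\beta,Y\rangle - \langle\beta,[X,Y]_A\rangle$ together with the identity $\mathcal{L}^A_X D = [X,D]_A$ for higher-degree sections, and reduce every $R$-term via the defining relation $\langle R_X\alpha,Y\rangle = -\langle\alpha, Y\cdot_A X\rangle$. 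The resulting sum consists of scalar expressions in $\rho_A$, $\cdot_A$, $[\cdot,\cdot]_A$, $h$, and the four arguments $\alpha,\beta,\gamma,X$. After antisymmetrizing in $\alpha,\beta$ and regrouping, the surviving combinations should assemble into evaluations of $\llbracket h,h\rrbracket_A$ on triples built from $\alpha,\beta,\gamma$ (with the $X$-dependence turning into pairings with these $A^*$-inputs), and the Koszul-Vinberg hypothesis forces the total to vanish.

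The chief obstacle is this last bookkeeping step: the first slot $\alpha\cdot^{h^{\sharp}}\beta$ of the outer product is not $C^\infty(M)$-linear in $\alpha$ or $\beta$, so the outer term $\mathcal{L}^A_{h^\sharp(\alpha\cdot^{h^{\sharp}}\beta)}\gamma$ produces derivative-type contributions that must be tracked, and the nested $d_A$-exact piece $d_A(h(\alpha,\beta))$ interacts nontrivially with the outer Lie derivative and with $R$. Matching the resulting profusion of terms to the correct instances of the Koszul-Vinberg obstruction tensor, and ensuring that the residual symmetric-in-$(\alpha,\beta)$ terms truly cancel from both sides, is where the real computational work lies.
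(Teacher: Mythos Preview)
The paper does not give its own proof of this proposition: it is quoted as \cite[Theorem 4.10]{LSB1}. What the paper does supply, immediately after the statement, are the two identities (\ref{h,h A property}) and (\ref{left-symmetry of cdot h sharp}), valid for \emph{any} $h\in\Gamma(S^2A)$, which it then uses in the proof of its own Theorem 2.5. Identity (\ref{left-symmetry of cdot h sharp}) is precisely the associator formula
\[
(\alpha,\beta,\gamma)_{h^\sharp}-(\beta,\alpha,\gamma)_{h^\sharp}
=\mathcal{L}^A_{\llbracket h,h\rrbracket_A(\alpha,\cdot,\beta)-\llbracket h,h\rrbracket_A(\beta,\cdot,\alpha)}\gamma
+\llbracket h,h\rrbracket_A(\beta,L_X\alpha,\gamma)-\llbracket h,h\rrbracket_A(\alpha,L_X\beta,\gamma)
\]
paired with $X$, and it vanishes when $\llbracket h,h\rrbracket_A=0$. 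Your plan for the left-symmetry step---pair the associator difference with $X$, expand via the Cartan/Leibniz identities and the definition of $R$, antisymmetrize in $\alpha,\beta$, and reassemble the survivors into evaluations of $\llbracket h,h\rrbracket_A$---is exactly the computation that produces (\ref{left-symmetry of cdot h sharp}); the ``obstacle'' you flag (the non-$C^\infty(M)$-linear first slot feeding into $h^\sharp$ and then $\mathcal{L}^A$) is handled by identity (\ref{h,h A property}), which rewrites $h^\sharp(\alpha\cdot^{h^\sharp}\beta)$ as $h^\sharp\alpha\cdot_A h^\sharp\beta+\llbracket h,h\rrbracket_A(\alpha,\cdot,\beta)$ and thereby isolates the obstruction tensor cleanly. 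Your treatment of the two Leibniz axioms is correct as written. In short, your outline is sound and recovers the same identities the paper records; the paper simply takes those identities as given from \cite{LSB1} rather than deriving them.
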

\noindent
The other operator $L_X:\Gamma(A^*)\rightarrow \Gamma(A^*)$ is defined by, for any $\alpha$ in $\Gamma(A^*)$ and $X,Y$ in $\Gamma(A)$,
\begin{align*}
\langle L_X \alpha, Y\rangle = \rho_A(X)\langle \alpha, Y\rangle-\langle \alpha, X\cdot_A Y\rangle.
\end{align*}
For any $h$ in $\Gamma(S^2A^*)$, the product $\cdot^{h^\sharp}$ defined by (\ref{K-V product}) satisfies the following equations:
\begin{align}
&\llbracket h,h\rrbracket_A(\alpha,\,\cdot\,,\beta)=h^\sharp(\alpha\cdot^{h^\sharp}\beta)-h^\sharp\alpha\cdot_Ah^\sharp\beta,\label{h,h A property}\\
&\langle(\alpha\cdot^{h^\sharp}\beta)\cdot^{h^\sharp}\gamma-\alpha\cdot^{h^\sharp}(\beta\cdot^{h^\sharp}\gamma)-((\beta\cdot^{h^\sharp}\alpha)\cdot^{h^\sharp}\gamma-\beta\cdot^{h^\sharp}(\alpha\cdot^{h^\sharp}\gamma)),X\rangle \label{left-symmetry of cdot h sharp}\\
&\ =\langle\mathcal{L}_{\llbracket h,h\rrbracket_A(\alpha,\,\cdot\,,\beta)-\llbracket h,h\rrbracket_A(\beta,\,\cdot\,,\alpha)}^A\gamma,X\rangle 
+\llbracket h,h\rrbracket_A(\beta,L_X\alpha,\gamma)-\llbracket h,h\rrbracket_A(\alpha,L_X\beta,\gamma) \nonumber 
\end{align}
for any $\alpha,\beta,\gamma$ in $\Gamma(A^*)$ and $X$ in $\Gamma(A)$.

\subsection{Jacobi algebroids and Jacobi structures}\label{Jacobi algebroids and Jacobi structures}

A pair $(A,\phi_0)$ is a {\it Jacobi algebroid} over a manifold $M$ if $A=(A,[\cdot ,\cdot ]_A,\rho_A)$ is a Lie algebroid over $M$ and $\phi_0$ in $\Gamma(A^*)$ is $d_A$-closed, that is, $d_A\phi_{0}=0$. 

\begin{example}\label{trivial example}
For any Lie algebroid $A$ over $M$, we set $\phi_0:=0$. Then $(A,\phi_0)$ is a Jacobi algebroid. We call $\phi_0$ the {\it trivial Jacobi algebroid structure} on $A$. Therefore any Lie algebroid is a Jacobi algebroid.
\end{example}

\begin{example}\label{01cocy}
For a Lie algebroid $A\oplus \mathbb{R}$ in Example \ref{A oplus R}, we set $\phi_0:=(0,1)$ in $\Gamma(A^{*}\oplus \mathbb{R})=\Gamma (A^{*})\times C^{\infty}(M)$. Then $(A\oplus \mathbb{R},\phi_0)$ is a Jacobi algebroid.
\end{example}

For a Jacobi algebroid $(A,\phi_{0})$, the {\it $\phi_0$-Schouten bracket} $[\cdot,\cdot]_{A,\phi_0}$ on $\Gamma (\Lambda^*A)$ is given by
\begin{align*}
[D_1,D_2]_{A,\phi_0}:=[D_1,D_2]_A+(a_1-1)&D_1\wedge \iota _{\phi_0}D_2 \nonumber\\
                                                     &-(-1)^{a_1+1}(a_2-1)\iota_{\phi_0}D_1\wedge D_2
\end{align*}
for any $D_i$ in $\Gamma(\Lambda ^{a_i}A)$, where $[\cdot,\cdot]_A$ is the Schouten bracket of the Lie algebroid $A$. The {\it $\phi_0$-differential} $d_{A,\phi_0}$ and the {\it $\phi_0$-Lie derivative} $\mathcal{L}_X^{A,\phi_0}$ are defined by
\begin{equation}\label{phi_0-differential and Lie derivative}
d_{A,\phi_0}\omega :=d_A\omega +\phi_0\wedge \omega,\quad \mathcal{L}_X^{A,\phi_0}:=\iota _X\circ d_{A,\phi_0}+d_{A,\phi_0}\circ \iota_X
\end{equation}
for any $\omega$ in $\Gamma (\Lambda ^*A^*)$ and $X$ in $\Gamma (A)$.

A {\it Jacobi structure} on a Jacobi algebroid $(A,\phi_{0})$ is a $2$-section $\pi $ in $\Gamma (\Lambda^{2}A)$ satisfying the condition
\begin{equation*}\label{Jacobi def equation}
[\pi,\pi]_{A,\phi_{0}}=0.
\end{equation*}
For a Lie algebroid $A$, a Poisson structure $\pi$ on $A$ is a Jacobi structure on the trivial Jacobi algebroid $(A,0)$.

For any $2$-section $\pi $ on $(A,\phi_0)$, we define a skew-symmetric bilinear bracket $[\cdot,\cdot]_{\pi,\phi_{0}} $ on $\Gamma (A^*)$ by, for any $\xi,\eta$ in $\Gamma (A^*)$,
\begin{align*}
[\xi,\eta]_{\pi,\phi_{0}}:=\mathcal{L}_{\pi^\sharp \xi}^{A,\phi_0}\eta -\mathcal{L}_{\pi^\sharp \eta}^{A,\phi_0}\xi -d_{A,\phi_0}\langle \pi^\sharp \xi,\eta \rangle.
\end{align*}
Moreover it follows that
 \begin{align*}
\frac{1}{2}[\pi ,\pi ]_{A,\phi_0}(\xi,\eta,\cdot)=[\pi^\sharp \xi,\pi^\sharp \eta ]_{A}-\pi^\sharp[\xi,\eta]_{\pi,\phi_{0}}.
\end{align*}
Then a triple $A^*_{\pi,\phi_0} := (A^*, [\cdot,\cdot]_{\pi,\phi_0}, \rho_\pi)$ is a Lie algebroid over $M$ if and only if $\pi$ is Jacobi. Furthermore, in the case that $\pi$ is Jacobi, a pair $(A^*_{\pi,\phi_0}, X_0)$ is a Jacobi algebroid over $M$, where $X_0:=-\pi^{\sharp}\phi_0$ in $\Gamma(A)$. 


\begin{example}\label{Jacobi manifold}
Let $A$ be a Lie algebroid over $M$, $\Lambda $ a $2$-section on $A$ and $E$ a section on $A$ satisfying
\begin{align*}
[\Lambda ,\Lambda ]_{A}=2E\wedge \Lambda ,\quad [E,\Lambda ]_{A}=0.
\end{align*}
Then a pair $(\Lambda ,E)$ in $\Gamma (\Lambda ^{2}A)\oplus \Gamma (A)\cong \Gamma (\Lambda ^{2}(A\oplus \mathbb{R}))$ is a Jacobi structure on a Jacobi algebroid $(A\oplus \mathbb{R},(0,1))$, i.e., it satisfies $[(\Lambda ,E),(\Lambda ,E)]_{A\oplus \mathbb{R},(0,1)}=0$. When $(\Lambda ,E)$ is a Jacobi structure on $(TM\oplus \mathbb{R},(0,1))$, we call it a {\it Jacobi structure on $M$} and a triple $(M,\Lambda,E)$ a {\it Jacobi manifold}. If $\pi $ is a Poisson structure on $A$, then $(\pi,0)$ is a Jacobi structure on $(A\oplus \mathbb{R},(0,1))$.
\end{example}

%
Let $(A,\phi_0)$ be a Jacobi algebroid over $M$. We set $\tilde{A}:=A\times \mathbb{R}$. Then $\tilde{A}$ is a vector bundle over $M\times \mathbb{R}$. The sections $\Gamma (\tilde{A})$ can be identified with the set of time-dependent sections of $A$. Here a time-dependent section on $A$ means a section on $A$ with a parameter $t$, where $t$ is a coordinate of $\mathbb{R}$. Under this identification, we can define two Lie algebroid structures $([\cdot,\cdot\hat{]}_A^{\phi_0},\hat{\rho}_A^{\phi_0})$ and $([\cdot,\cdot\bar{]}_A^{\phi_0},\bar{\rho}_A^{\phi_0})$ on $\tilde{A}$, where for any $\tilde{X}$ and $\tilde{Y}$ in $\Gamma(\tilde{A})$,
\begin{align}
 [\tilde{X},\tilde{Y}\hat{]}_A^{\phi_0}&:=e^{-t}\left([\tilde{X},\tilde{Y}]_A+\langle\phi_0,\tilde{X}\rangle\left(\frac{\partial \tilde{Y}}{\partial t}-\tilde{Y}\right)-\langle\phi_0,\tilde{Y}\rangle\left(\frac{\partial \tilde{X}}{\partial t}-\tilde{X}\right)\right), \label{hat kakko}\\ 
 \hat{\rho}_A^{\phi_0}(\tilde{X})&:=e^{-t}\left(\rho_A(\tilde{X})+\langle \phi_0,\tilde{X}\rangle \frac{\partial}{\partial t}\right), \label{hat anchor}\\
 [\tilde{X},\tilde{Y}\bar{]}_A^{\phi_0}&:=[\tilde{X},\tilde{Y}]_A+\langle\phi_0,\tilde{X}\rangle\frac{\partial \tilde{Y}}{\partial t}-\langle\phi_0,\tilde{Y}\rangle\frac{\partial \tilde{X}}{\partial t}, \label{bar kakko}\\ 
 \bar{\rho}_A^{\phi_0}(\tilde{X})&:=\rho_A(\tilde{X})+\langle \phi_0,\tilde{X}\rangle \frac{\partial}{\partial t}. \label{bar anchor}
\end{align}
Conversely, for a Lie algebroid $A$ over $M$ and a section $\phi_0$ on $A$, if the triple $(\tilde{A},[\cdot,\cdot\hat{]}_A^{\phi_0},\hat{\rho}_A^{\phi_0})$ (resp. $(\tilde{A},[\cdot,\cdot\bar{]}_A^{\phi_0},\bar{\rho}_A^{\phi_0})$) defined by (\ref{hat kakko}) and (\ref{hat anchor}) (resp. (\ref{bar kakko}) and (\ref{bar anchor})) is a Lie algebroid over $M\times \mathbb{R}$, then $(A,\phi_0)$ is a Jacobi algebroid over $M$, i.e., $d_{A}\phi_0=0$. A vector bundle $\tilde{A}$ equipped with the Lie algebroid structure $([\cdot,\cdot\hat{]}_A^{\phi_0},\hat{\rho}_A^{\phi_0})$ (resp. $([\cdot,\cdot\bar{]}_A^{\phi_0},\bar{\rho}_A^{\phi_0})$) is denoted by $\tilde{A}_{{\phi}_{0}}^{\wedge}$ (resp. $\tilde{A}_{{\phi}_{0}}^{-}$). 

Let $(A,\phi_0)$ be a Jacobi algebroid over $M$, $\pi$ a 2-section on $A$ and set $\tilde{\pi}:=e^{-t}\pi$ in $\Gamma (\Lambda ^2\tilde{A})$. Then the following holds:
\begin{align*}
[\tilde{\pi},\tilde{\pi}\bar{]}_A^{\phi_0}=e^{-2t}[\pi,\pi]_{A,\phi_0}.
\end{align*}
Therefore a 2-section $\pi$ on $A$ is a Jacobi structure on a Jacobi algebroid $(A,\phi_0)$ over $M$ if and only if $\tilde{\pi}$ in $\Gamma (\Lambda ^2\tilde{A})$ is a Poisson structure on a Lie algebroid $\tilde{A}_{{\phi}_{0}}^{-}$ over $M\times \mathbb{R}$. The Poisson structure $\tilde{\pi}$ on $\tilde{A}_{{\phi}_{0}}^{-}$ is called {\it the Poissonization} of $\pi$. 

In the case of $(A,\phi_0)=(TM\oplus \mathbb{R}, (0,1))$, the Lie algebroid $\tilde{A}_{{\phi}_{0}}^{-}
$ is isomorphic to the standard Lie algebroid $T(M\times \mathbb{R})$ over $M\times \mathbb{R}$. Then the Poissonization $\widetilde{(\Lambda ,E)}$ of a Jacobi structure $(\Lambda ,E)$ on $(TM\oplus \mathbb{R},(0,1))$ corresponds to a Poisson structure $\Pi:=e^{-t}\left(\Lambda+\dfrac{\partial}{\partial t}\wedge E\right)$ on $T(M\times \mathbb{R})$. This is just the Poissonizaion of a Jacobi structure on $M$.

\section{Jacobi-left-symmetric algebroids and Jacobi-Koszul-Vinberg structures}
In this section, we define a Jacobi-left-symmetric algebroid and a Jacobi-Koszul-Vinberg structure on a Jacobi-left-symmetric algebroid.  We show that a Jacobi-Koszul-Vinberg structure on a Jacobi-left-symmetric algebroid $(A,\phi_0)$ induces a Jacobi-left-symmetric algebroid structure on its dual bundle $A^*$.  We also define the Koszul-Vinbergization of a Jacobi-Koszul-Vinberg structure as an analogy of the Poissonization of a Jacobi structure.  Furthermore, we give the definition of a Jacobi-Koszul-Vinberg manifold, which is a symmetric analogue of a Jacobi manifold. Finally, we show that a Jacobi-Koszul-Vinberg manifold with non-degenerate $h \in \Gamma(S^2 TM)$ is a locally conformally Hessian manifold defined in \cite{Osi}.   

\begin{defn}
A {\it Jacobi-left-symmetric algebroid} over $M$ is a pair $(A,\phi_0)$ of a left-symmetric algebroid $A=(A,\cdot _A,\rho_A)$ over $M$ and $\phi_0$ in $\Gamma(A^*)$ satisfying that $\delta_A\phi_{0}$ is symmetric. 
\end{defn}

\begin{prop}\label{cocyc}
Let $(A,\cdot_A, \rho_A)$ be a left-symmetric algebroid and $\phi_0$ an element in $\Gamma(A^*)$. Then $\delta_A \phi_0$ is symmetric if and only if $d_A\phi_0=0$, where $d_A$ is the differential of the sub-adjacent Lie algebroid of $(A,\cdot_A, \rho_A)$. In particular, if $\delta_A \phi_0 =0$, then $d_A\phi_0=0$. 
\end{prop}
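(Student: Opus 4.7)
The plan is to reduce everything to an explicit formula for $(\delta_A\phi_0)(X,Y)$ and show that its antisymmetrization in $X,Y$ coincides precisely with $(d_A\phi_0)(X,Y)$. Since $\phi_0$ lies in $C^1(A)$, I evaluate $\delta_A\phi_0$ using the defining formula for $\delta_A$ with $k=1$. The terms from the third sum vanish (the index range $0\leq i<j\leq k-1=0$ is empty), and the first two sums each reduce to a single term with $i=0$, giving
\begin{equation*}
(\delta_A\phi_0)(X,Y)=\rho_A(X)\langle\phi_0,Y\rangle-\langle\phi_0,X\cdot_A Y\rangle
\end{equation*}
for any $X,Y\in\Gamma(A)$.

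Next I antisymmetrize. Subtracting the same expression with $X$ and $Y$ swapped and using the definition $[X,Y]_A=X\cdot_A Y-Y\cdot_A X$ of the bracket on the sub-adjacent Lie algebroid, I obtain
\begin{equation*}
(\delta_A\phi_0)(X,Y)-(\delta_A\phi_0)(Y,X)=\rho_A(X)\langle\phi_0,Y\rangle-\rho_A(Y)\langle\phi_0,X\rangle-\langle\phi_0,[X,Y]_A\rangle,
\end{equation*}
and the right-hand side is exactly $(d_A\phi_0)(X,Y)$ by the formula for the Lie algebroid differential applied to a $1$-cosection.

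From this identity both directions of the equivalence follow immediately: $\delta_A\phi_0$ is symmetric if and only if its antisymmetrization vanishes, if and only if $d_A\phi_0=0$. The last sentence of the proposition is then obvious, since $\delta_A\phi_0=0$ is a fortiori symmetric and therefore forces $d_A\phi_0=0$. There is no real obstacle here; the only thing to be careful about is correctly reading off the $k=1$ specialization of the $\delta_A$ formula, in particular noticing that the term $\omega(X_0,\dots,\hat X_i,\dots,X_{k-1},X_i\cdot_A X_k)$ becomes $\langle\phi_0,X\cdot_A Y\rangle$ rather than producing any extra contribution from $X_{k-1}$.
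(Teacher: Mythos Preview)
Your proof is correct and essentially identical to the paper's: both establish the identity $(d_A\phi_0)(X,Y)=(\delta_A\phi_0)(X,Y)-(\delta_A\phi_0)(Y,X)$ and conclude from it. The only cosmetic difference is that the paper starts from the definition of $d_A\phi_0$ and expands $[X,Y]_A=X\cdot_A Y-Y\cdot_A X$ to recognize the two $\delta_A\phi_0$ terms, whereas you start from $\delta_A\phi_0$ and antisymmetrize to reach $d_A\phi_0$; the content is the same.
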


\begin{proof}
Let $(A, [\cdot,\cdot]_A, \rho_A)$ be the sub-adjacent Lie algebroid of $(A,\cdot_A, \rho_A)$. For any $X$ and $Y$ in $\Gamma(A)$,
\begin{align*}
    (d_A\phi_0)(X,Y)&=\rho_A(X)\langle \phi_0,Y\rangle-\rho_A(Y)\langle \phi_0,X\rangle-\langle \phi_0,[X,Y]_A\rangle\\
                    &=\rho_A(X)\langle \phi_0,Y\rangle-\rho_A(Y)\langle\phi_0,X\rangle-\langle \phi_0,X\cdot_AY-Y\cdot_AX\rangle\\
                    &=(\rho_A(X)\langle \phi_0,Y\rangle-\langle \phi_0,X\cdot_AY\rangle)\\
                    &\qquad \qquad -(\rho_A(Y)\langle \phi_0,X\rangle-\langle \phi_0,Y\cdot_AX\rangle)\\
                    &=(\delta_A\phi_0)(X,Y)-(\delta_A\phi_0)(Y,X).
\end{align*}
Therefore the conclusion holds.
\end{proof}

\begin{corollary}\label{sub-adjacent Jacobi algebroid}
For a Jacobi-left-symmetric algebroid $((A,\cdot_A, \rho_A), \phi_0)$, a pair $((A,$ $ [\cdot,\cdot]_A, \rho_A), \phi_0)$ is a Jacobi algebroid, where $[X,Y]_A=X\cdot_A Y-Y\cdot_A X$ for any $X,Y$ in $\Gamma (A)$.
\end{corollary}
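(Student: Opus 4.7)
The plan is to assemble the corollary directly from two facts already in the excerpt, with essentially no computation needed. First, immediately after the definition of a left-symmetric algebroid (\cite[Definition 2.1]{Boy1} in the text), it is noted that for any left-symmetric algebroid $(A,\cdot_A,\rho_A)$, the triple $(A,[\cdot,\cdot]_A,\rho_A)$ with $[X,Y]_A := X\cdot_A Y - Y\cdot_A X$ is a Lie algebroid, namely the sub-adjacent Lie algebroid. So the Lie algebroid half of the Jacobi algebroid data is already given to us for free.

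The remaining point is to verify that $\phi_0$ is $d_A$-closed with respect to this sub-adjacent Lie algebroid. But this is precisely Proposition \ref{cocyc}: symmetry of $\delta_A\phi_0$ is equivalent to $d_A\phi_0 = 0$. Since $((A,\cdot_A,\rho_A),\phi_0)$ is by hypothesis a Jacobi-left-symmetric algebroid, $\delta_A\phi_0$ is symmetric, hence $d_A\phi_0 = 0$.

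Combining these two observations, $((A,[\cdot,\cdot]_A,\rho_A),\phi_0)$ meets the definition of a Jacobi algebroid given in Subsection \ref{Jacobi algebroids and Jacobi structures}. There is no genuine obstacle here; the corollary is essentially a bookkeeping step that justifies the naming of the notion (and will presumably be used later to import Schouten-bracket, $\phi_0$-differential, and Poissonization machinery from the Jacobi-algebroid side onto any Jacobi-left-symmetric algebroid). I would write the proof as a one-sentence invocation of Proposition \ref{cocyc} together with the sub-adjacent Lie algebroid construction, without reproducing any calculations.
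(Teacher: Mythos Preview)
Your proposal is correct and matches the paper's approach: the corollary is stated without proof in the paper, as it follows immediately from Proposition~\ref{cocyc} together with the sub-adjacent Lie algebroid construction, exactly as you outline.
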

\noindent
We call $((A, [\cdot,\cdot]_A, \rho_A), \phi_0)$ in Corollary \ref{sub-adjacent Jacobi algebroid} the {\it sub-adjacent Jacobi algebroid} of $((A,\cdot_A, \rho_A),$ $ \phi_0)$.

Let $(A,\phi_0)$ be a Jacobi-left-symmetric algebroid over $M$.
For $h$ in $\Gamma (S^2A)$, define $\llbracket h,h \rrbracket_A^{\phi_0}$ in $\Gamma (\Lambda^2A\otimes A)$ by
\begin{align*}
\llbracket h,h \rrbracket_A^{\phi_0} (\alpha,\beta, \gamma):=
\: \llbracket h,h \rrbracket_A (\alpha,\beta, \gamma)+h(\phi_0,\alpha)h(\beta,\gamma) -h(\phi_0,\beta)h(\alpha,\gamma)
\end{align*}
for any $\alpha,\beta$  and $\gamma$ in $\Gamma (A^*)$. 
$\llbracket \cdot,\cdot \rrbracket_A^{\phi_0}$ is an analogy of the $\phi_0$-twisted Schouten bracket 
$[\cdot,\cdot]_{A}^{\phi_0}$ on $\Gamma (\Lambda ^*A)$ for a Jacobi algebroid $((A, [\cdot,\cdot]_A, \rho_A), \phi_0)$.

\begin{defn}
{\it A Jacobi-Koszul-Vinberg structure} $h$ on a Jacobi-left-symmetric algebroid $(A,\phi_0)$ over $M$ is a section of $S^{2}A$ such that
\begin{equation*}
\llbracket h,h \rrbracket_A ^{\phi_0}=0.
\end{equation*} 
\end{defn}  

\begin{theorem}
Let $((A,\cdot_A, \rho_A), \phi_0)$ be a Jacobi-left-symmetric algebroid over $M$ and $h$ an element in $\Gamma(S^2A)$. Then:
\begin{enumerate}
    \item[(1)] $A^*_{h,\phi_0}:= (A^*, \cdot^{h^{\sharp},\phi_0}, \rho_{A} \circ h^{\sharp})$ satisfies (\ref{left-Leib}) and (\ref{right-lin}), where $\cdot^{h^{\sharp},\phi_0}$ is defined by
\begin{equation*}
\alpha \cdot^{h^{\sharp},\phi_0} \beta :=\mathcal{L}^{A,\phi_0}_{h^\sharp \alpha}\beta -R_{h^\sharp \beta}\alpha 
-d_{A}^{\phi_0}(h (\alpha,\beta)). \quad (\alpha, \beta \in \Gamma(A^*))
\end{equation*} 
    \item[(2)] If $h$ is a Jacobi-Koszul-Vinberg structure on $((A,\cdot_A, \rho_A), \phi_0)$, then $A^*_{h,\phi_0}$ is a left-symmetric algebroid over $M$. Furthermore, $(A^*_{h,\phi_0}, -h^{\sharp}\phi_0)$ is a Jacobi-left-symmetric algebroid over $M$.
\end{enumerate}
\end{theorem}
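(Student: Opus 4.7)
The plan is to mirror the proof of Proposition \ref{Thm4.10 in [7]} with every operation replaced by its $\phi_0$-twisted counterpart. Two natural strategies are available: (a) direct computation by expressing $\cdot^{h^\sharp,\phi_0}$ as $\cdot^{h^\sharp}$ plus explicit $\phi_0$-correction terms and promoting the identities (\ref{h,h A property})--(\ref{left-symmetry of cdot h sharp}) to the twisted setting; or (b) defining a Koszul-Vinbergization $\tilde{h}:=e^{-t}h$ on a suitable left-symmetric algebroid structure on $A\times\mathbb{R}$ (analogous to $\tilde{A}_{\phi_0}^{-}$ from Subsection \ref{Jacobi algebroids and Jacobi structures}) and reducing the whole theorem to Proposition \ref{Thm4.10 in [7]} applied to $\tilde{h}$. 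Route (a) is self-contained and I will outline it.

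Using (\ref{phi_0-differential and Lie derivative}) together with $\iota_X(\phi_0\wedge\beta)=\langle X,\phi_0\rangle\beta-\phi_0\langle X,\beta\rangle$ for $\beta\in\Gamma(A^*)$, one obtains the explicit decomposition
\[
\alpha\cdot^{h^\sharp,\phi_0}\beta = \alpha\cdot^{h^\sharp}\beta + h(\phi_0,\alpha)\,\beta - h(\alpha,\beta)\,\phi_0.
\]
For part (1), the untwisted piece already satisfies (\ref{left-Leib}) and (\ref{right-lin}) by Proposition \ref{Thm4.10 in [7]}, and the two correction terms manifestly respect both axioms thanks to the $C^\infty(M)$-linearity of $h^\sharp$ and the Leibniz rule for $\rho_A\circ h^\sharp$. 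Thus part (1) is routine.

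For left-symmetry in part (2), I would first establish the $\phi_0$-twisted analog of (\ref{h,h A property}),
\[
\llbracket h,h\rrbracket_A^{\phi_0}(\alpha,\,\cdot\,,\beta) = h^\sharp(\alpha\cdot^{h^\sharp,\phi_0}\beta) - h^\sharp\alpha\cdot_A h^\sharp\beta,
\]
which follows by combining (\ref{h,h A property}) with the above decomposition of $\cdot^{h^\sharp,\phi_0}$ and the defining relation for $\llbracket\cdot,\cdot\rrbracket_A^{\phi_0}$; the extra $h(\phi_0,\cdot)h(\cdot,\cdot)$ summands match exactly. Next I would derive a $\phi_0$-twisted analog of (\ref{left-symmetry of cdot h sharp}) by computing the associator of $\cdot^{h^\sharp,\phi_0}$ along the lines of \cite{LSB1}, with each $\mathcal{L}^A$ and $d_A$ replaced by its twisted version. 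The principal obstacle is bookkeeping the many extra $\phi_0$-terms generated by this replacement and checking that they regroup precisely as values of $\llbracket h,h\rrbracket_A^{\phi_0}$. Once the twisted (\ref{left-symmetry of cdot h sharp}) is in hand, $\llbracket h,h\rrbracket_A^{\phi_0}=0$ immediately yields left-symmetry.

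Finally, to verify that $(A^*_{h,\phi_0},-h^\sharp\phi_0)$ is Jacobi-left-symmetric, by Proposition \ref{cocyc} it suffices to check $d_{A^*_{h,\phi_0}}(-h^\sharp\phi_0)=0$ on the sub-adjacent Lie algebroid of $A^*_{h,\phi_0}$. Unpacking this via the bracket of $A^*_{h,\phi_0}$ and the anchor $\rho_A\circ h^\sharp$, the required identity reduces to an algebraic relation that follows from the twisted (\ref{h,h A property})-analog evaluated with one slot equal to $\phi_0$, combined with $d_A\phi_0=0$ for the given Jacobi-left-symmetric algebroid (which itself holds by Proposition \ref{cocyc}).
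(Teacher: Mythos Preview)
Your outline is essentially the paper's own strategy: the same decomposition $\alpha\cdot^{h^\sharp,\phi_0}\beta=\alpha\cdot^{h^\sharp}\beta+h(\phi_0,\alpha)\beta-h(\alpha,\beta)\phi_0$, the same twisted version of (\ref{h,h A property}), and a direct expansion of the associator of $\cdot^{h^\sharp,\phi_0}$ starting from (\ref{left-symmetry of cdot h sharp}). One correction is needed, however. You expect all the $\phi_0$-correction terms in the associator to regroup ``precisely as values of $\llbracket h,h\rrbracket_A^{\phi_0}$'', so that $\llbracket h,h\rrbracket_A^{\phi_0}=0$ alone finishes the job. In the paper's computation this is not what happens: after repackaging, the associator difference equals
\[
\langle\mathcal{L}^A_{\llbracket h,h\rrbracket_A^{\phi_0}(\alpha,\cdot,\beta)-\llbracket h,h\rrbracket_A^{\phi_0}(\beta,\cdot,\alpha)}\gamma,X\rangle
+\llbracket h,h\rrbracket_A^{\phi_0}(\beta,L_X\alpha,\gamma)-\llbracket h,h\rrbracket_A^{\phi_0}(\alpha,L_X\beta,\gamma)
\]
\[
+\,h(\beta,\gamma)(d_A\phi_0)(h^\sharp\alpha,X)-h(\alpha,\gamma)(d_A\phi_0)(h^\sharp\beta,X),
\]
and along the way a pair of terms $(\delta_A\phi_0)(h^\sharp\alpha,h^\sharp\beta)-(\delta_A\phi_0)(h^\sharp\beta,h^\sharp\alpha)$ must also be cancelled. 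Thus left-symmetry requires not only $\llbracket h,h\rrbracket_A^{\phi_0}=0$ but also the Jacobi-left-symmetric hypothesis (symmetry of $\delta_A\phi_0$, equivalently $d_A\phi_0=0$). You invoke $d_A\phi_0=0$ only in the final step about $-h^\sharp\phi_0$; be aware it is already needed here.

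For the last claim, the paper checks directly that $\delta_{h,\phi_0}(-h^\sharp\phi_0)$ is symmetric, obtaining $(\delta_{h,\phi_0}(-h^\sharp\phi_0))(\alpha,\beta)=-(\delta_A\phi_0)(h^\sharp\alpha,h^\sharp\beta)-\llbracket h,h\rrbracket_A^{\phi_0}(\alpha,\phi_0,\beta)$ from the twisted (\ref{h,h A property}). Your route via $d_{A^*_{h,\phi_0}}(-h^\sharp\phi_0)=0$ and Proposition~\ref{cocyc} is equivalent and uses the same two ingredients.
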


\begin{proof}
(1) By (\ref{phi_0-differential and Lie derivative}), for any $\alpha$ and $\beta$ in $\Gamma(A^*)$, we have
\begin{align*}
\alpha \cdot^{h^{\sharp},\phi_0} \beta=\alpha \cdot^{h^{\sharp}} \beta+\langle \phi_0,h^\sharp\alpha\rangle\beta-h(\alpha,\beta)\phi_0.
\end{align*}
For any $f$ in $C^\infty(M)$,
\begin{align*}
\alpha \cdot^{h^{\sharp},\phi_0}(f\beta)&=\alpha \cdot^{h^{\sharp}} (f\beta)+\langle \phi_0,h^\sharp\alpha\rangle f\beta-h(\alpha,f\beta)\phi_0\\
&=f(\alpha \cdot^{h^{\sharp}} \beta)+(\rho_A(h^\sharp\alpha)f)\beta+f\langle \phi_0,h^\sharp\alpha\rangle\beta-fh(\alpha,\beta)\phi_0\\
&=f(\alpha \cdot^{h^{\sharp}} \beta+\langle \phi_0,h^\sharp\alpha\rangle\beta-h(\alpha,\beta)\phi_0)+(\rho_A(h^\sharp\alpha)f)\beta\\
&=f\alpha \cdot^{h^{\sharp},\phi_0}\beta+(\rho_A(h^\sharp\alpha)f)\beta,\\
(f\alpha) \cdot^{h^{\sharp},\phi_0}\beta&=(f\alpha) \cdot^{h^{\sharp}} \beta+\langle \phi_0,h^\sharp f\alpha\rangle \beta-h(f\alpha,\beta)\phi_0\\
&=f(\alpha \cdot^{h^{\sharp}} \beta)+f\langle \phi_0,h^\sharp \alpha\rangle \beta-fh(\alpha,\beta)\phi_0\\
&=f(\alpha \cdot^{h^{\sharp}} \beta+\langle \phi_0,h^\sharp \alpha\rangle \beta-h(\alpha,\beta)\phi_0)\\
&=f(\alpha \cdot^{h^{\sharp},\phi_0}\beta)
\end{align*}
hold. To show (2), we first prove that $\llbracket h,h\rrbracket_{A}^{\phi_0}(\alpha,\cdot,\beta)=h^\sharp(\alpha\cdot^{h^\sharp,\phi_0}\beta)-h^\sharp\alpha\cdot_Ah^\sharp\beta$. In fact, by using (\ref{h,h A property}),
\begin{align*}
\llbracket h,h \rrbracket_A^{\phi_0} (\alpha,\beta, \gamma)&=
\: \llbracket h,h \rrbracket_A (\alpha,\beta, \gamma)+h(\phi_0,\alpha)h(\beta,\gamma) -h(\phi_0,\beta)h(\alpha,\gamma)\\
&=\langle h^\sharp(\alpha\cdot^{h^\sharp}\gamma)-h^\sharp\alpha\cdot_Ah^\sharp\gamma,\beta\rangle+\langle \phi_0,h^\sharp\alpha\rangle\langle h^\sharp\gamma,\beta\rangle-h(\alpha,\gamma)\langle h^\sharp\phi_0,\beta\rangle\\
&=\langle h^\sharp(\alpha\cdot^{h^\sharp}\gamma+\langle \phi_0,h^\sharp\alpha\rangle\gamma-h(\alpha,\gamma)\phi_0)-h^\sharp\alpha\cdot_Ah^\sharp\gamma,\beta\rangle\\
&=\langle h^\sharp(\alpha\cdot^{h^\sharp,\phi_0}\gamma)-h^\sharp\alpha\cdot_Ah^\sharp\gamma,\beta\rangle.
\end{align*}
Then we calculate
\begin{align*}
(\alpha&\cdot^{h^\sharp,\phi_0}\beta)\cdot^{h^\sharp,\phi_0}\gamma\\
&=(\alpha \cdot^{h^{\sharp}} \beta+\langle \phi_0,h^\sharp\alpha\rangle\beta-h(\alpha,\beta)\phi_0)\cdot^{h^{\sharp}}\gamma\\
&\qquad +\langle\phi_0,h^\sharp(\alpha\cdot^{h^\sharp,\phi_0}\beta)\rangle\gamma-h(\alpha\cdot^{h^\sharp,\phi_0}\beta,\gamma)\phi_0\\
&=(\alpha\cdot^{h^\sharp}\beta)\cdot^{h^\sharp}\gamma+(\langle \phi_0,h^\sharp\alpha\rangle\beta)\cdot^{h^\sharp}\gamma-(h(\alpha,\beta)\phi_0)\cdot^{h^\sharp}\gamma\\
&\qquad +(\llbracket h,h \rrbracket_A^{\phi_0}(\alpha,\phi_0,\beta)+ 
\langle\phi_0,h^\sharp\alpha \cdot_A h^\sharp\beta\rangle)\gamma-\langle h^\sharp(\alpha \cdot^{h^{\sharp},\phi_0} \beta),\gamma\rangle\phi_0\\
&=(\alpha\cdot^{h^\sharp}\beta)\cdot^{h^\sharp}\gamma+h(\alpha,\phi_0)(\beta\cdot^{h^\sharp}\gamma)-h(\alpha,\beta)(\phi_0\cdot^{h^\sharp}\gamma)\\
&\qquad +\llbracket h,h \rrbracket_A^{\phi_0}(\alpha,\phi_0,\beta)\gamma+ 
\langle\phi_0,h^\sharp\alpha \cdot_A h^\sharp\beta\rangle\gamma\\
&\qquad -(\llbracket h,h \rrbracket_A^{\phi_0}(\alpha,\gamma,\beta)+\langle h^\sharp\alpha \cdot_A h^\sharp\beta,\gamma\rangle)\phi_0\\
&=(\alpha\cdot^{h^\sharp}\beta)\cdot^{h^\sharp}\gamma+h(\alpha,\phi_0)(\beta\cdot^{h^\sharp}\gamma)-h(\alpha,\beta)(\phi_0\cdot^{h^\sharp}\gamma)\\
&\qquad +\llbracket h,h \rrbracket_A^{\phi_0}(\alpha,\phi_0,\beta)\gamma+ 
\langle\phi_0,h^\sharp\alpha \cdot_A h^\sharp\beta\rangle\gamma\\
&\qquad -\llbracket h,h \rrbracket_A^{\phi_0}(\alpha,\gamma,\beta)\phi_0-\langle h^\sharp\alpha \cdot_A h^\sharp\beta,\gamma\rangle\phi_0,\\
(\beta&\cdot^{h^\sharp,\phi_0}\alpha)\cdot^{h^\sharp,\phi_0}\gamma\\
&=(\beta\cdot^{h^\sharp}\alpha)\cdot^{h^\sharp}\gamma+h(\beta,\phi_0)(\alpha\cdot^{h^\sharp}\gamma)-h(\beta,\alpha)(\phi_0\cdot^{h^\sharp}\gamma)\\
&\qquad +\llbracket h,h \rrbracket_A^{\phi_0}(\beta,\phi_0,\alpha)\gamma+ 
\langle\phi_0,h^\sharp\beta \cdot_A h^\sharp\alpha\rangle\gamma\\
&\qquad -\llbracket h,h \rrbracket_A^{\phi_0}(\beta,\gamma,\alpha)\phi_0-\langle h^\sharp\beta \cdot_A h^\sharp\alpha,\gamma\rangle\phi_0,\\
\alpha&\cdot^{h^\sharp,\phi_0}(\beta\cdot^{h^\sharp,\phi_0}\gamma)\\
&=\alpha\cdot^{h^\sharp}(\beta \cdot^{h^{\sharp}} \gamma+\langle \phi_0,h^\sharp\beta\rangle\gamma-h(\beta,\gamma)\phi_0)\\
&\qquad +\langle\phi_0,h^\sharp\alpha\rangle(\beta \cdot^{h^{\sharp}} \gamma+\langle \phi_0,h^\sharp\beta\rangle\gamma-h(\beta,\gamma)\phi_0)-h(\alpha,\beta\cdot^{h^\sharp,\phi_0}\gamma)\phi_0\\
&=\alpha\cdot^{h^\sharp}(\beta \cdot^{h^{\sharp}} \gamma)+(\rho_A(h^\sharp\alpha)\langle \phi_0,h^\sharp\beta\rangle)\gamma+\langle \phi_0,h^\sharp\beta\rangle(\alpha\cdot^{h^\sharp}\gamma)\\
&\qquad -(\rho_A(h^\sharp\alpha)(h(\beta,\gamma)))\phi_0-h(\beta,\gamma)(\alpha\cdot^{h^\sharp}\phi_0)+\langle\phi_0,h^\sharp\alpha\rangle(\beta \cdot^{h^{\sharp}} \gamma)\\
&\qquad +\langle\phi_0,h^\sharp\alpha\rangle\langle \phi_0,h^\sharp\beta\rangle\gamma-\langle\phi_0,h^\sharp\alpha\rangle h(\beta,\gamma)\phi_0-\langle \alpha,h^\sharp(\beta \cdot^{h^{\sharp}} \gamma)\rangle\phi_0\\
&=\alpha\cdot^{h^\sharp}(\beta \cdot^{h^{\sharp}} \gamma)+(\rho_A(h^\sharp\alpha)(h(\beta,\phi_0)))\gamma+h(\beta, \phi_0)(\alpha\cdot^{h^\sharp}\gamma)\\
&\qquad -(\rho_A(h^\sharp\alpha)(h(\beta,\gamma)))\phi_0-h(\beta,\gamma)(\alpha\cdot^{h^\sharp}\phi_0)\\
&\qquad +h(\alpha,\phi_0)(\beta \cdot^{h^{\sharp}} \gamma)+h(\alpha,\phi_0)h(\beta,\phi_0)\gamma-h(\alpha,\phi_0)h(\beta,\gamma)\phi_0\\
&\qquad -(\llbracket h,h \rrbracket_A^{\phi_0}(\beta,\alpha,\gamma)+\langle \alpha,h^\sharp\beta \cdot_A h^\sharp\gamma\rangle)\phi_0\\
&=\alpha\cdot^{h^\sharp}(\beta \cdot^{h^{\sharp}} \gamma)+(\rho_A(h^\sharp\alpha)(h(\beta,\phi_0)))\gamma+h(\beta, \phi_0)(\alpha\cdot^{h^\sharp}\gamma)\\
&\qquad -(\rho_A(h^\sharp\alpha)(h(\beta,\gamma)))\phi_0-h(\beta,\gamma)(\alpha\cdot^{h^\sharp}\phi_0)\\
&\qquad +h(\alpha,\phi_0)(\beta \cdot^{h^{\sharp}} \gamma)+h(\alpha,\phi_0)h(\beta,\phi_0)\gamma-h(\alpha,\phi_0)h(\beta,\gamma)\phi_0\\
&\qquad -\llbracket h,h \rrbracket_A^{\phi_0}(\beta,\alpha,\gamma)\phi_0-\langle \alpha,h^\sharp\beta \cdot_A h^\sharp\gamma\rangle\phi_0,\\
\beta&\cdot^{h^\sharp,\phi_0}(\alpha\cdot^{h^\sharp,\phi_0}\gamma)\\
&=\beta\cdot^{h^\sharp}(\alpha \cdot^{h^{\sharp}} \gamma)+(\rho_A(h^\sharp\beta)(h(\alpha,\phi_0)))\gamma+h(\alpha, \phi_0)(\beta\cdot^{h^\sharp}\gamma)\\
&\qquad -(\rho_A(h^\sharp\beta)(h(\alpha,\gamma)))\phi_0-h(\alpha,\gamma)(\beta\cdot^{h^\sharp}\phi_0)\\
&\qquad +h(\beta,\phi_0)(\alpha \cdot^{h^{\sharp}} \gamma)+h(\beta,\phi_0)h(\alpha,\phi_0)\gamma-h(\beta,\phi_0)h(\alpha,\gamma)\phi_0\\
&\qquad -\llbracket h,h \rrbracket_A^{\phi_0}(\alpha,\beta,\gamma)\phi_0-\langle \beta,h^\sharp\alpha \cdot_A h^\sharp\gamma\rangle\phi_0.
\end{align*}
Therefore it follows that
\begin{align*}
(\alpha&\cdot^{h^\sharp,\phi_0}\beta)\cdot^{h^\sharp,\phi_0}\gamma-\alpha\cdot^{h^\sharp,\phi_0}(\beta\cdot^{h^\sharp,\phi_0}\gamma)\\
&\qquad -((\beta\cdot^{h^\sharp,\phi_0}\alpha)\cdot^{h^\sharp,\phi_0}\gamma-\beta\cdot^{h^\sharp,\phi_0}(\alpha\cdot^{h^\sharp,\phi_0}\gamma))\\
&=(\alpha\cdot^{h^\sharp}\beta)\cdot^{h^\sharp}\gamma-\alpha\cdot^{h^\sharp}(\beta\cdot^{h^\sharp}\gamma)-((\beta\cdot^{h^\sharp}\alpha)\cdot^{h^\sharp}\gamma-\beta\cdot^{h^\sharp}(\alpha\cdot^{h^\sharp}\gamma))\\
&\qquad +h(\alpha,\phi_0)(\beta\cdot^{h^\sharp}\gamma)
-h(\beta,\phi_0)(\alpha\cdot^{h^\sharp}\gamma)\\
&\qquad +h(\beta,\gamma)(\alpha\cdot^{h^\sharp}\phi_0)-h(\alpha,\gamma)(\beta\cdot^{h^\sharp}\phi_0)\\
&\qquad -\llbracket h,h\rrbracket_{A}^{\phi_0}(\alpha,\gamma,\beta)\phi_0+\llbracket h,h\rrbracket_{A}^{\phi_0}(\beta,\gamma,\alpha)\phi_0\\
&\qquad +\llbracket h,h\rrbracket_{A}^{\phi_0}(\beta,\alpha,\gamma)\phi_0-\llbracket h,h\rrbracket_{A}^{\phi_0}(\alpha,\beta,\gamma)\phi_0\\
&\qquad +(\rho_A(h^\sharp\alpha)(h(\beta,\gamma))-\rho_A(h^\sharp\beta)(h(\alpha,\gamma))+\langle \alpha,h^\sharp\beta\cdot_Ah^\sharp\gamma\rangle\\
&\qquad -\langle \beta,h^\sharp\alpha\cdot_Ah^\sharp\gamma\rangle
-\langle\gamma,h^\sharp\alpha\cdot_A h^\sharp\beta-h^\sharp\beta\cdot_A h^\sharp\alpha\rangle\\
&\qquad +h(\phi_0,\alpha)h(\beta,\gamma)-h(\phi_0,\beta)h(\gamma,\alpha))\phi_0\\
&\qquad +(\rho_A(h^\sharp\beta)\langle\phi_0,h^\sharp\alpha\rangle-\langle\phi_0,h^\sharp\beta\cdot_Ah^\sharp\alpha\rangle)\gamma\\
&\qquad -(\rho_A(h^\sharp\alpha)\langle\phi_0,h^\sharp\beta\rangle-\langle\phi_0,h^\sharp\alpha\cdot_Ah^\sharp\beta\rangle)\gamma\\
&\qquad +\llbracket h,h\rrbracket_{A}^{\phi_0}(\alpha,\phi_0,\beta)\gamma-\llbracket h,h\rrbracket_{A}^{\phi_0}(\beta,\phi_0,\alpha)\gamma\\
&=(\alpha\cdot^{h^\sharp}\beta)\cdot^{h^\sharp}\gamma-\alpha\cdot^{h^\sharp}(\beta\cdot^{h^\sharp}\gamma)-((\beta\cdot^{h^\sharp}\alpha)\cdot^{h^\sharp}\gamma-\beta\cdot^{h^\sharp}(\alpha\cdot^{h^\sharp}\gamma))\\
&\qquad +h(\alpha,\phi_0)(\beta\cdot^{h^\sharp}\gamma)
-h(\beta,\phi_0)(\alpha\cdot^{h^\sharp}\gamma)\\
&\qquad +h(\beta,\gamma)(\alpha\cdot^{h^\sharp}\phi_0)-h(\alpha,\gamma)(\beta\cdot^{h^\sharp}\phi_0)\\
&\qquad -\llbracket h,h\rrbracket_{A}^{\phi_0}(\alpha,\gamma,\beta)\phi_0+\llbracket h,h\rrbracket_{A}^{\phi_0}(\beta,\gamma,\alpha)\phi_0\\
&\qquad +\llbracket h,h\rrbracket_{A}^{\phi_0}(\beta,\alpha,\gamma)\phi_0-\llbracket h,h\rrbracket_{A}^{\phi_0}(\alpha,\beta,\gamma)\phi_0\\
&\qquad +(\rho_A(h^\sharp\alpha)(h(\beta,\gamma))-\rho_A(h^\sharp\beta)(h(\alpha,\gamma))+\langle \alpha,h^\sharp\beta\cdot_Ah^\sharp\gamma\rangle\\
&\qquad -\langle \beta,h^\sharp\alpha\cdot_Ah^\sharp\gamma\rangle
-\langle\gamma,[h^\sharp\alpha, h^\sharp\beta]_A\rangle\\
&\qquad +h(\phi_0,\alpha)h(\beta,\gamma)-h(\phi_0,\beta)h(\gamma,\alpha))\phi_0\\
&\qquad -(\delta_A\phi_0)(h^\sharp\alpha,h^\sharp\beta)\gamma+(\delta_A\phi_0)(h^\sharp\beta,h^\sharp\alpha)\gamma\\
&\qquad +\llbracket h,h\rrbracket_{A}^{\phi_0}(\alpha,\phi_0,\beta)\gamma-\llbracket h,h\rrbracket_{A}^{\phi_0}(\beta,\phi_0,\alpha)\gamma\\
&=(\alpha\cdot^{h^\sharp}\beta)\cdot^{h^\sharp}\gamma-\alpha\cdot^{h^\sharp}(\beta\cdot^{h^\sharp}\gamma)-((\beta\cdot^{h^\sharp}\alpha)\cdot^{h^\sharp}\gamma-\beta\cdot^{h^\sharp}(\alpha\cdot^{h^\sharp}\gamma))\\
&\qquad +h(\alpha,\phi_0)(\beta\cdot^{h^\sharp}\gamma)
-h(\beta,\phi_0)(\alpha\cdot^{h^\sharp}\gamma)\\
&\qquad +h(\beta,\gamma)(\alpha\cdot^{h^\sharp}\phi_0)-h(\alpha,\gamma)(\beta\cdot^{h^\sharp}\phi_0)\\
&\qquad -\llbracket h,h\rrbracket_{A}^{\phi_0}(\alpha,\gamma,\beta)\phi_0+\llbracket h,h\rrbracket_{A}^{\phi_0}(\beta,\gamma,\alpha)\phi_0\\
&\qquad +\llbracket h,h\rrbracket_{A}^{\phi_0}(\beta,\alpha,\gamma)\phi_0-\llbracket h,h\rrbracket_{A}^{\phi_0}(\alpha,\beta,\gamma)\phi_0\\
&\qquad +\llbracket h,h \rrbracket_A^{\phi_0}(\alpha,\beta,\gamma)\phi_0-((\delta_A\phi_0)(h^\sharp\alpha,h^\sharp\beta)-(\delta_A\phi_0)(h^\sharp\beta,h^\sharp\alpha))\gamma\\
&\qquad +\llbracket h,h\rrbracket_{A}^{\phi_0}(\alpha,\phi_0,\beta)\gamma-\llbracket h,h\rrbracket_{A}^{\phi_0}(\beta,\phi_0,\alpha)\gamma\\
&=(\alpha\cdot^{h^\sharp}\beta)\cdot^{h^\sharp}\gamma-\alpha\cdot^{h^\sharp}(\beta\cdot^{h^\sharp}\gamma)-((\beta\cdot^{h^\sharp}\alpha)\cdot^{h^\sharp}\gamma-\beta\cdot^{h^\sharp}(\alpha\cdot^{h^\sharp}\gamma))\\
&\qquad +h(\alpha,\phi_0)(\beta\cdot^{h^\sharp}\gamma)
-h(\beta,\phi_0)(\alpha\cdot^{h^\sharp}\gamma)\\
&\qquad +h(\beta,\gamma)(\alpha\cdot^{h^\sharp}\phi_0)-h(\alpha,\gamma)(\beta\cdot^{h^\sharp}\phi_0)\\
&\qquad -\llbracket h,h\rrbracket_{A}^{\phi_0}(\alpha,\gamma,\beta)\phi_0+\llbracket h,h\rrbracket_{A}^{\phi_0}(\beta,\gamma,\alpha)\phi_0\\
&\qquad +\llbracket h,h\rrbracket_{A}^{\phi_0}(\beta,\alpha,\gamma)\phi_0-((\delta_A\phi_0)(h^\sharp\alpha,h^\sharp\beta)-(\delta_A\phi_0)(h^\sharp\beta,h^\sharp\alpha))\gamma\\
&\qquad +\llbracket h,h\rrbracket_{A}^{\phi_0}(\alpha,\phi_0,\beta)\gamma-\llbracket h,h\rrbracket_{A}^{\phi_0}(\beta,\phi_0,\alpha)\gamma.
\end{align*}
By using that $\llbracket h,h \rrbracket_A^{\phi_0}=0$ and symmetry of $\delta_A\phi_0$, we obtain
\begin{align*}
(\alpha&\cdot^{h^\sharp,\phi_0}\beta)\cdot^{h^\sharp,\phi_0}\gamma-\alpha\cdot^{h^\sharp,\phi_0}(\beta\cdot^{h^\sharp,\phi_0}\gamma)\\
&\qquad -((\beta\cdot^{h^\sharp,\phi_0}\alpha)\cdot^{h^\sharp,\phi_0}\gamma-\beta\cdot^{h^\sharp,\phi_0}(\alpha\cdot^{h^\sharp,\phi_0}\gamma))\\
&=(\alpha\cdot^{h^\sharp}\beta)\cdot^{h^\sharp}\gamma-\alpha\cdot^{h^\sharp}(\beta\cdot^{h^\sharp}\gamma)-((\beta\cdot^{h^\sharp}\alpha)\cdot^{h^\sharp}\gamma-\beta\cdot^{h^\sharp}(\alpha\cdot^{h^\sharp}\gamma))\\
&\qquad +h(\alpha,\phi_0)(\beta\cdot^{h^\sharp}\gamma)
-h(\beta,\phi_0)(\alpha\cdot^{h^\sharp}\gamma)\\
&\qquad +h(\beta,\gamma)(\alpha\cdot^{h^\sharp}\phi_0)-h(\alpha,\gamma)(\beta\cdot^{h^\sharp}\phi_0).
\end{align*}
Hence, for any $X$ in $\Gamma(A)$, it follows from (\ref{left-symmetry of cdot h sharp}) and the definition of $\llbracket h,h \rrbracket_A^{\phi_0}$ that
\begin{align*}
\langle (\alpha&\cdot^{h^\sharp,\phi_0}\beta)\cdot^{h^\sharp,\phi_0}\gamma-\alpha\cdot^{h^\sharp,\phi_0}(\beta\cdot^{h^\sharp,\phi_0}\gamma)\\
&\qquad -((\beta\cdot^{h^\sharp,\phi_0}\alpha)\cdot^{h^\sharp,\phi_0}\gamma-\beta\cdot^{h^\sharp,\phi_0}(\alpha\cdot^{h^\sharp,\phi_0}\gamma)),X\rangle\\
&=\langle (\alpha\cdot^{h^\sharp}\beta)\cdot^{h^\sharp}\gamma-\alpha\cdot^{h^\sharp}(\beta\cdot^{h^\sharp}\gamma)-((\beta\cdot^{h^\sharp}\alpha)\cdot^{h^\sharp}\gamma-\beta\cdot^{h^\sharp}(\alpha\cdot^{h^\sharp}\gamma)),X\rangle\\
&\qquad +h(\alpha,\phi_0)\langle\beta\cdot^{h^\sharp}\gamma,X\rangle
-h(\beta,\phi_0)\langle\alpha\cdot^{h^\sharp}\gamma,X\rangle\\
&\qquad +h(\beta,\gamma)\langle\alpha\cdot^{h^\sharp}\phi_0,X\rangle-h(\alpha,\gamma)\langle\beta\cdot^{h^\sharp}\phi_0,X\rangle\\
&=\langle\mathcal{L}_{\llbracket h,h\rrbracket_A(\alpha,\,\cdot\,,\beta)-\llbracket h,h\rrbracket_A(\beta,\,\cdot\,,\alpha)}^A\gamma,X\rangle\\
&\qquad +\llbracket h,h\rrbracket_A(\beta,L_X\alpha,\gamma)-\llbracket h,h\rrbracket_A(\alpha,L_X\beta,\gamma)\\
&\qquad +h(\alpha,\phi_0)\langle\mathcal{L}_{h^\sharp\beta}^A\gamma-R_{h^\sharp\gamma}\beta-d_A(h(\beta,\gamma)),X\rangle\\
&\qquad -h(\beta,\phi_0)\langle\mathcal{L}_{h^\sharp\alpha}^A\gamma-R_{h^\sharp\gamma}\alpha-d_A(h(\alpha,\gamma)),X\rangle\\
&\qquad +h(\beta,\gamma)\langle\mathcal{L}_{h^\sharp\alpha}^A\phi_0-R_{h^\sharp\phi_0}\alpha-d_A(h(\alpha,\phi_0)),X\rangle\\
&\qquad -h(\alpha,\gamma)\langle\mathcal{L}_{h^\sharp\beta}^A\phi_0-R_{h^\sharp\phi_0}\beta-d_A(h(\beta,\phi_0)),X\rangle\\
&=\langle\mathcal{L}_{\llbracket h,h\rrbracket_A(\alpha,\,\cdot\,,\beta)}^A\gamma,X\rangle-\langle\mathcal{L}_{\llbracket h,h\rrbracket_A(\beta,\,\cdot\,,\alpha)}^A\gamma,X\rangle\\
&\qquad +\llbracket h,h\rrbracket_A(\beta,L_X\alpha,\gamma)-\llbracket h,h\rrbracket_A(\alpha,L_X\beta,\gamma)\\
&\qquad +h(\alpha,\phi_0)(\rho_A(h^\sharp\beta)\langle\gamma,X\rangle-\langle\gamma,\mathcal{L}_{h^\sharp\beta}^AX\rangle\\
&\qquad \phantom{+h(\alpha,\phi_0)(\rho_A(h^\sharp\beta)\langle\gamma,X\rangle}+\langle\beta,X\cdot_Ah^\sharp\gamma\rangle-\rho_A(X)(h(\beta,\gamma)))\\
&\qquad -h(\beta,\phi_0)(\rho_A(h^\sharp\alpha)\langle\gamma,X\rangle-\langle\gamma,\mathcal{L}_{h^\sharp\alpha}^AX\rangle\\
&\qquad \phantom{+h(\alpha,\phi_0)(\rho_A(h^\sharp\alpha)\langle\gamma,X\rangle}+\langle\alpha,X\cdot_Ah^\sharp\gamma\rangle-\rho_A(X)(h(\alpha,\gamma)))\\
&\qquad +h(\beta,\gamma)(\rho_A(h^\sharp\alpha)\langle\phi_0,X\rangle-\langle\phi_0,\mathcal{L}_{h^\sharp\alpha}^AX\rangle\\
&\qquad \phantom{+h(\alpha,\phi_0)(\rho_A(h^\sharp\alpha)\langle\gamma,X\rangle}+\langle\alpha,X\cdot_Ah^\sharp\phi_0\rangle-\rho_A(X)(h(\alpha,\phi_0)))\\
&\qquad -h(\alpha,\gamma)(\rho_A(h^\sharp\beta)\langle\phi_0,X\rangle-\langle\phi_0,\mathcal{L}_{h^\sharp\beta}^AX\rangle\\
&\qquad \phantom{+h(\alpha,\phi_0)(\rho_A(h^\sharp\alpha)\langle\gamma,X\rangle}+\langle\beta,X\cdot_Ah^\sharp\phi_0\rangle-\rho_A(X)(h(\beta,\phi_0)))\\
&=\langle\mathcal{L}_{\llbracket h,h\rrbracket_A(\alpha,\,\cdot\,,\beta)}^A\gamma,X\rangle-\langle\mathcal{L}_{\llbracket h,h\rrbracket_A(\beta,\,\cdot\,,\alpha)}^A\gamma,X\rangle\\
&\qquad +\llbracket h,h\rrbracket_A(\beta,L_X\alpha,\gamma)-\llbracket h,h\rrbracket_A(\alpha,L_X\beta,\gamma)\\
&\qquad +\rho_A(h(\alpha,\phi_0)h^\sharp\beta)\langle\gamma,X\rangle-h(\alpha,\phi_0)\langle\gamma,[h^\sharp\beta,X]_A\rangle\\
&\qquad \phantom{+\rho_A(h(\alpha,\phi_0)}+h(\alpha,\phi_0)(\langle\beta,X\cdot_Ah^\sharp\gamma\rangle-\rho_A(X)\langle h^\sharp\beta,\gamma\rangle)\\
&\qquad -\rho_A(h(\beta,\phi_0)h^\sharp\alpha)\langle\gamma,X\rangle+h(\beta,\phi_0)\langle\gamma,[h^\sharp\alpha,X]_A\rangle\\
&\qquad \phantom{+\rho_A(h(\alpha,\phi_0)}-h(\beta,\phi_0)(\langle\alpha,X\cdot_Ah^\sharp\gamma\rangle-\rho_A(X)\langle h^\sharp\alpha,\gamma\rangle)\\
&\qquad +h(\beta,\gamma)(\rho_A(h^\sharp\alpha)\langle\phi_0,X\rangle-\langle\phi_0,[h^\sharp\alpha,X]_A\rangle)\\
&\qquad \phantom{+\rho_A(h(\alpha,\gamma)}+h(\beta,\gamma)(\langle\alpha,X\cdot_Ah^\sharp\phi_0\rangle-\rho_A(X)\langle \alpha,h^\sharp\phi_0\rangle)\\
&\qquad -h(\alpha,\gamma)(\rho_A(h^\sharp\beta)\langle\phi_0,X\rangle-\langle\phi_0,[h^\sharp\beta,X]_A)\rangle\\
&\qquad \phantom{+\rho_A(h(\beta,\gamma)}-h(\alpha,\gamma)(\langle\beta,X\cdot_Ah^\sharp\phi_0)\rangle-\rho_A(X)\langle \beta,h^\sharp\phi_0\rangle)\\
&=\langle\mathcal{L}_{\llbracket h,h\rrbracket_A(\alpha,\,\cdot\,,\beta)}^A\gamma,X\rangle-\langle\mathcal{L}_{\llbracket h,h\rrbracket_A(\beta,\,\cdot\,,\alpha)}^A\gamma,X\rangle\\
&\qquad +\llbracket h,h\rrbracket_A(\beta,L_X\alpha,\gamma)-\llbracket h,h\rrbracket_A(\alpha,L_X\beta,\gamma)\\
&\qquad +\rho_A(h(\alpha,\phi_0)h^\sharp\beta)\langle\gamma,X\rangle-\langle\gamma,h(\alpha,\phi_0)[h^\sharp\beta,X]_A\rangle\\
&\qquad \phantom{+\rho_A(h(\alpha,\phi_0)h^\sharp\beta)\langle\gamma,X\rangle-h(\alpha,\phi_0)}-h(\alpha,\phi_0)\langle L_X\beta,h^\sharp\gamma\rangle\\
&\qquad -\rho_A(h(\beta,\phi_0)h^\sharp\alpha)\langle\gamma,X\rangle+\langle\gamma,h(\beta,\phi_0)[h^\sharp\alpha,X]_A\rangle\\
&\qquad \phantom{+\rho_A(h(\beta,\phi_0)h^\sharp\alpha)\langle\gamma,X\rangle-h(\beta,\phi_0)}+h(\beta,\phi_0)\langle L_X\alpha,h^\sharp\gamma\rangle\\
&\qquad +h(\beta,\gamma)((d_A\phi_0)(h^\sharp\alpha,X)+\rho_A(X)\langle \phi_0,h^\sharp\alpha\rangle)-h(\beta,\gamma)\langle L_X\alpha,h^\sharp\phi_0\rangle\\
&\qquad -h(\alpha,\gamma)((d_A\phi_0)(h^\sharp\beta,X)+\rho_A(X)\langle \phi_0,h^\sharp\beta\rangle)+h(\alpha,\gamma)\langle L_X\beta,h^\sharp\phi_0\rangle\\
&=\langle\mathcal{L}_{\llbracket h,h\rrbracket_A(\alpha,\,\cdot\,,\beta)}^A\gamma,X\rangle-\langle\mathcal{L}_{\llbracket h,h\rrbracket_A(\beta,\,\cdot\,,\alpha)}^A\gamma,X\rangle\\
&\qquad +(\llbracket h,h\rrbracket_A(\beta,L_X\alpha,\gamma)+h(\phi_0,\beta)h(L_X\alpha,\gamma)-h(\phi_0,L_X\alpha)h(\beta,\gamma))\\
&\qquad -(\llbracket h,h\rrbracket_A(\alpha,L_X\beta,\gamma)+h(\phi_0,\alpha)h( L_X\beta,\gamma)-h(\phi_0,L_X\beta)h(\alpha,\gamma))\\
&\qquad +\rho_A(h(\alpha,\phi_0)h^\sharp\beta)\langle\gamma,X\rangle-\langle\gamma,h(\alpha,\phi_0)[h^\sharp\beta,X]_A\rangle\\
&\qquad -\rho_A(h(\beta,\phi_0)h^\sharp\alpha)\langle\gamma,X\rangle+\langle\gamma,h(\beta,\phi_0)[h^\sharp\alpha,X]_A\rangle\\
&\qquad +h(\beta,\gamma)(d_A\phi_0)(h^\sharp\alpha,X)+\langle(\rho_A(X)(h(\alpha,\phi_0)))h^\sharp\beta,\gamma\rangle\\
&\qquad -h(\alpha,\gamma)(d_A\phi_0)(h^\sharp\beta,X)+\langle(\rho_A(X)(h(\beta,\phi_0)))h^\sharp\alpha,\gamma\rangle\\
&=\langle\mathcal{L}_{\llbracket h,h\rrbracket_A(\alpha,\,\cdot\,,\beta)}^A\gamma,X\rangle-\langle\mathcal{L}_{\llbracket h,h\rrbracket_A(\beta,\,\cdot\,,\alpha)}^A\gamma,X\rangle\\
&\qquad +\llbracket h,h\rrbracket_A^{\phi_0}(\beta,L_X\alpha,\gamma)-\llbracket h,h\rrbracket_A^{\phi_0}(\alpha,L_X\beta,\gamma)\\
&\qquad +\rho_A(h(\alpha,\phi_0)h^\sharp\beta)\langle\gamma,X\rangle\\
&\qquad \phantom{+\rho_A(h(\alpha,\phi_0)}-\langle\gamma,h(\alpha,\phi_0)[h^\sharp\beta,X]_A-(\rho_A(X)(h(\alpha,\phi_0)))h^\sharp\beta\rangle\\
&\qquad -(\rho_A(h(\beta,\phi_0)h^\sharp\alpha)\langle\gamma,X\rangle\\
&\qquad \phantom{+\rho_A(h(\beta,\phi_0)}-\langle\gamma,h(\beta,\phi_0)[h^\sharp\alpha,X]_A-(\rho_A(X)(h(\beta,\phi_0)))h^\sharp\alpha\rangle)\\
&\qquad +h(\beta,\gamma)(d_A\phi_0)(h^\sharp\alpha,X)-h(\alpha,\gamma)(d_A\phi_0)(h^\sharp\beta,X)\\
&=\langle\mathcal{L}_{\llbracket h,h\rrbracket_A(\alpha,\,\cdot\,,\beta)}^A\gamma,X\rangle-\langle\mathcal{L}_{\llbracket h,h\rrbracket_A(\beta,\,\cdot\,,\alpha)}^A\gamma,X\rangle\\
&\qquad +\llbracket h,h\rrbracket_A^{\phi_0}(\beta,L_X\alpha,\gamma)-\llbracket h,h\rrbracket_A^{\phi_0}(\alpha,L_X\beta,\gamma)\\
&\qquad +\rho_A(h(\alpha,\phi_0)h^\sharp\beta)\langle\gamma,X\rangle-\langle\gamma,[h(\alpha,\phi_0)h^\sharp\beta,X]_A\rangle\\
&\qquad -(\rho_A(h(\beta,\phi_0)h^\sharp\alpha)\langle\gamma,X\rangle-\langle\gamma,[h(\beta,\phi_0)h^\sharp\alpha,X]_A\rangle\\
&\qquad +h(\beta,\gamma)(d_A\phi_0)(h^\sharp\alpha,X)-h(\alpha,\gamma)(d_A\phi_0)(h^\sharp\beta,X)\\
&=\langle\mathcal{L}_{\llbracket h,h\rrbracket_A(\alpha,\,\cdot\,,\beta)}^A\gamma,X\rangle+\rho_A(h(\alpha,\phi_0)h^\sharp\beta)\langle\gamma,X\rangle-\langle\gamma,\mathcal{L}_{h(\alpha,\phi_0)h^\sharp\beta}^AX\rangle\\
&\qquad -(\langle\mathcal{L}_{\llbracket h,h\rrbracket_A(\beta,\,\cdot\,,\alpha)}^A\gamma,X\rangle+\rho_A(h(\beta,\phi_0)h^\sharp\alpha)\langle\gamma,X\rangle-\langle\gamma,\mathcal{L}_{h(\beta,\phi_0)h^\sharp\alpha}^AX\rangle)\\
&\qquad +\llbracket h,h\rrbracket_A^{\phi_0}(\beta,L_X\alpha,\gamma)-\llbracket h,h\rrbracket_A^{\phi_0}(\alpha,L_X\beta,\gamma)\\
&\qquad +h(\beta,\gamma)(d_A\phi_0)(h^\sharp\alpha,X)-h(\alpha,\gamma)(d_A\phi_0)(h^\sharp\beta,X)\\
&=\langle\mathcal{L}_{\llbracket h,h\rrbracket_A(\alpha,\,\cdot\,,\beta)}^A\gamma,X\rangle+\langle\mathcal{L}_{h(\alpha,\phi_0)h^\sharp\beta}^A\gamma,X\rangle-\langle\mathcal{L}_{h(\beta,\alpha)h^\sharp\phi_0}^A\gamma,X\rangle\\
&\qquad -(\langle\mathcal{L}_{\llbracket h,h\rrbracket_A(\beta,\,\cdot\,,\alpha)}^A\gamma,X\rangle+\langle\mathcal{L}_{h(\beta,\phi_0)h^\sharp\alpha}^A\gamma,X\rangle-\langle\mathcal{L}_{h(\alpha,\beta)h^\sharp\phi_0}^A\gamma,X\rangle)\\
&\qquad +\llbracket h,h\rrbracket_A^{\phi_0}(\beta,L_X\alpha,\gamma)-\llbracket h,h\rrbracket_A^{\phi_0}(\alpha,L_X\beta,\gamma)\\
&\qquad +h(\beta,\gamma)(d_A\phi_0)(h^\sharp\alpha,X)-h(\alpha,\gamma)(d_A\phi_0)(h^\sharp\beta,X)\\
&=\langle\mathcal{L}_{\llbracket h,h\rrbracket_A(\alpha,\,\cdot\,,\beta)+h(\alpha,\phi_0)h^\sharp\beta-h(\beta,\alpha)h^\sharp\phi_0}^A\gamma,X\rangle\\
&\qquad -\langle\mathcal{L}_{\llbracket h,h\rrbracket_A(\beta,\,\cdot\,,\alpha)+h(\beta,\phi_0)h^\sharp\alpha-h(\alpha,\beta)h^\sharp\phi_0}^A\gamma,X\rangle\\
&\qquad +\llbracket h,h\rrbracket_A^{\phi_0}(\beta,L_X\alpha,\gamma)-\llbracket h,h\rrbracket_A^{\phi_0}(\alpha,L_X\beta,\gamma)\\
&\qquad +h(\beta,\gamma)(d_A\phi_0)(h^\sharp\alpha,X)-h(\alpha,\gamma)(d_A\phi_0)(h^\sharp\beta,X)\\
&=\left\langle\mathcal{L}_{\llbracket h,h\rrbracket_A^{\phi_0}(\alpha,\,\cdot\,,\beta)}^A\gamma,X\right\rangle-\left\langle\mathcal{L}_{\llbracket h,h\rrbracket_A^{\phi_0}(\beta,\,\cdot\,,\alpha)}^A\gamma,X\right\rangle\\
&\qquad +\llbracket h,h\rrbracket_A^{\phi_0}(\beta,L_X\alpha,\gamma)-\llbracket h,h\rrbracket_A^{\phi_0}(\alpha,L_X\beta,\gamma)\\
&\qquad +h(\beta,\gamma)(d_A\phi_0)(h^\sharp\alpha,X)-h(\alpha,\gamma)(d_A\phi_0)(h^\sharp\beta,X).
\end{align*}
By using that $\llbracket h,h \rrbracket_A^{\phi_0}=0$ and the $d_A$-closedness of $\phi_0$, we obtain
\begin{align*}
\langle (&\alpha\cdot^{h^\sharp,\phi_0}\beta)\cdot^{h^\sharp,\phi_0}\gamma-\alpha\cdot^{h^\sharp,\phi_0}(\beta\cdot^{h^\sharp,\phi_0}\gamma)\\
&\phantom{\alpha} -((\beta\cdot^{h^\sharp,\phi_0}\alpha)\cdot^{h^\sharp,\phi_0}\gamma-\beta\cdot^{h^\sharp,\phi_0}(\alpha\cdot^{h^\sharp,\phi_0}\gamma)),X\rangle=0,\\
&\alpha\cdot^{h^\sharp,\phi_0}\beta)\cdot^{h^\sharp,\phi_0}\gamma-\alpha\cdot^{h^\sharp,\phi_0}(\beta\cdot^{h^\sharp,\phi_0}\gamma)\\
&\phantom{\alpha} -((\beta\cdot^{h^\sharp,\phi_0}\alpha)\cdot^{h^\sharp,\phi_0}\gamma-\beta\cdot^{h^\sharp,\phi_0}(\alpha\cdot^{h^\sharp,\phi_0}\gamma))=0.
\end{align*}
Therefore $A^*_{h,\phi_0}$ is a left-symmetric algebroid over $M$. Finally, we prove symmetry of $\delta_{h,\phi_0}(-h^\sharp\phi_0)$, where $\delta_{h,\phi_0}$ is the coboundary operator of the left-symmetric algebroid $A_{h,\phi_0}^*$. It follows that, for any $\alpha, \beta$ in $\Gamma(A^*)$,
\begin{align*}
(\delta_{h,\phi_0}&(-h^\sharp\phi_0))(\alpha, \beta)\\
&=\rho_A(h^\sharp\alpha)\langle-h^\sharp\phi_0,\beta\rangle-\langle-h^\sharp\phi_0,\alpha\cdot^{h^\sharp,\phi_0}\beta\rangle\\
&=-\rho_A(h^\sharp\alpha)\langle\phi_0,h^\sharp\beta\rangle+\langle\phi_0,h^\sharp(\alpha\cdot^{h^\sharp,\phi_0}\beta)\rangle\\
&=-(\rho_A(h^\sharp\alpha)\langle\phi_0,h^\sharp\beta\rangle-(\llbracket h,h\rrbracket_{A}^{\phi_0}(h^\sharp\alpha,\phi_0,h^\sharp\beta)+\langle\phi_0,h^\sharp\alpha\cdot_Ah^\sharp\beta\rangle))\\
&=-(\rho_A(h^\sharp\alpha)\langle\phi_0,h^\sharp\beta\rangle-\langle\phi_0,h^\sharp\alpha\cdot_Ah^\sharp\beta\rangle)-\llbracket h,h\rrbracket_{A}^{\phi_0}(h^\sharp\alpha,\phi_0,h^\sharp\beta)\\
&=-(\delta_A\phi_0)(h^\sharp\alpha,h^\sharp\beta)-\llbracket h,h\rrbracket_{A}^{\phi_0}(h^\sharp\alpha,\phi_0,h^\sharp\beta),\\
(\delta_{h,\phi_0}&(-h^\sharp\phi_0))(\beta,\alpha)=-(\delta_A\phi_0)(h^\sharp\beta,h^\sharp\alpha)-\llbracket h,h\rrbracket_{A}^{\phi_0}(h^\sharp\beta,\phi_0,h^\sharp\alpha).
\end{align*}
By using that $\llbracket h,h \rrbracket_A^{\phi_0}=0$ and symmetry of $\delta_A\phi_0$, we obtain
\begin{align*}
(\delta_{h,\phi_0}(-h^\sharp\phi_0))(\alpha, \beta)=(\delta_{h,\phi_0}(-h^\sharp\phi_0))(\beta,\alpha).
\end{align*}
Therefore $(A^*_{h,\phi_0}, -h^{\sharp}\phi_0)$ is a Jacobi-left-symmetric algebroid over $M$.
\end{proof}

\begin{prop}\label{Jacobi-lsa to lsa}
Let $(A,\phi_0)$ be a Jacobi-left-symmetric algebroid over $M$.
Then $(\tilde{A},\hat{\cdot}_A^{\phi_0},\hat{\rho}_A^{\phi_0})$ and $(\tilde{A},\bar{\cdot}_A^{\phi_0},\bar{\rho}_A^{\phi_0})$ are left-symmetric algebroids over $M\times \mathbb{R}$, where
$\hat{\cdot}_A^{\phi_0}$ and $\bar{\cdot}_A^{\phi_0}$ are defined by, for any $\tilde{X},\tilde{Y}$ in $\Gamma(\tilde{A})$,
\begin{align}
\label{hat dot} \tilde{X} \:\hat{\cdot}_A^{\phi_0} \:\tilde{Y}&:=e^{-t}\left(\tilde{X}\cdot_A\tilde{Y}+\langle\phi_0,\tilde{X}\rangle\left(\frac{\partial \tilde{Y}}{\partial t}-\tilde{Y}\right)\right),\\ 
\label{bar dot} \tilde{X} \:\bar{\cdot}_A^{\phi_0} \:\tilde{Y} &:=\tilde{X}\cdot_A \tilde{Y}+\phi_0(\tilde{X})\frac{\partial \tilde{Y}}{\partial t},
\end{align}
and $\hat{\rho}_A^{\phi_0}$ and $\bar{\rho}_A^{\phi_0}$ are defined by (\ref{hat anchor}) and (\ref{bar anchor}) respectively. Conversely, for a left-symmetric algebroid $A$ over $M$ and $\phi_0$ in $\Gamma(A^*)$, if the triple $(\tilde{A},\hat{\cdot}_A^{\phi_0},\hat{\rho}_A^{\phi_0})$ (resp. $(\tilde{A},\bar{\cdot}_A^{\phi_0},\bar{\rho}_A^{\phi_0})$) defined by (\ref{hat dot}) and (\ref{hat anchor}) (resp. (\ref{bar dot}) and (\ref{bar anchor})) is a left-symmetric algebroid over $M\times \mathbb{R}$, then $(A,\phi_0)$ is a Jacobi-left-symmetric algebroid over $M$, i.e., $\delta_{A}\phi_0$ is symmetric. Furthermore the sub-adjacent Lie algebroid of $(\tilde{A},\hat{\cdot}_A^{\phi_0},\hat{\rho}_A^{\phi_0})$ (resp. $(\tilde{A},\bar{\cdot}_A^{\phi_0},\bar{\rho}_A^{\phi_0})$) is $\tilde{A}_{{\phi}_{0}}^{\wedge}$ (resp. $\tilde{A}_{{\phi}_{0}}^{-}$) in Subsection \ref{Jacobi algebroids and Jacobi structures}.
\end{prop}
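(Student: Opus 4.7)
The plan is to handle the forward direction by direct expansion, reduce the converse to Proposition \ref{cocyc} combined with the analogous characterization of Jacobi algebroids from Subsection \ref{Jacobi algebroids and Jacobi structures}, and verify the sub-adjacent Lie algebroid claim by computing commutators. I would treat the bar case first and obtain the hat case by an essentially identical argument with extra $e^{-t}$ bookkeeping. For the Leibniz-type axioms (\ref{left-Leib}) and (\ref{right-lin}) of $(\tilde{A}, \bar{\cdot}_A^{\phi_0}, \bar{\rho}_A^{\phi_0})$ over $M \times \mathbb{R}$, the key observation is $\bar{\rho}_A^{\phi_0}(\tilde{X})f = \rho_A(\tilde{X})f + \phi_0(\tilde{X})\,\partial_t f$ for $f \in C^\infty(M \times \mathbb{R})$; expanding $\tilde{X} \bar{\cdot}_A^{\phi_0} (f \tilde{Y})$ and $(f \tilde{X}) \bar{\cdot}_A^{\phi_0} \tilde{Y}$ via (\ref{bar dot}) and using the left-symmetric algebroid axioms of $\cdot_A$ together with the Leibniz rule for $\partial_t$ yields the required identities.

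The core step is left-symmetry. Define the associator $\mathrm{As}_{\bar{\cdot}}(\tilde{X}, \tilde{Y}, \tilde{Z}) := (\tilde{X} \bar{\cdot}_A^{\phi_0} \tilde{Y}) \bar{\cdot}_A^{\phi_0} \tilde{Z} - \tilde{X} \bar{\cdot}_A^{\phi_0} (\tilde{Y} \bar{\cdot}_A^{\phi_0} \tilde{Z})$; one must show $\mathrm{As}_{\bar{\cdot}}(\tilde{X}, \tilde{Y}, \tilde{Z}) = \mathrm{As}_{\bar{\cdot}}(\tilde{Y}, \tilde{X}, \tilde{Z})$. Expanding via (\ref{bar dot}) and the Leibniz rules, the associator decomposes into $\mathrm{As}_A(\tilde{X}, \tilde{Y}, \tilde{Z})$ plus correction terms carrying single and double occurrences of $\phi_0$ and $\partial_t$. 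After antisymmetrizing in $\tilde{X}, \tilde{Y}$, the $\mathrm{As}_A$ contribution vanishes by the left-symmetry of $\cdot_A$, and the residual correction assembles into multiples of $(\delta_A\phi_0)(\tilde{X}, \tilde{Y}) - (\delta_A \phi_0)(\tilde{Y}, \tilde{X})$, which vanish by the symmetry hypothesis on $\delta_A \phi_0$. The hat case follows by the same scheme, keeping track of the $e^{-t}$ factor and the extra $-\phi_0(\tilde{X})\tilde{Y}$ term in (\ref{hat dot}); the same $\delta_A\phi_0$-symmetry is the sole obstruction.

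The commutator identity $\tilde{X} \bar{\cdot}_A^{\phi_0} \tilde{Y} - \tilde{Y} \bar{\cdot}_A^{\phi_0} \tilde{X} = [\tilde{X}, \tilde{Y}]_A + \phi_0(\tilde{X})\,\partial_t \tilde{Y} - \phi_0(\tilde{Y})\,\partial_t \tilde{X}$ is immediate and matches (\ref{bar kakko}), with the anchor coinciding with (\ref{bar anchor}); the hat analogue is similar. Hence the sub-adjacent Lie algebroids are $\tilde{A}_{\phi_0}^{-}$ and $\tilde{A}_{\phi_0}^{\wedge}$ respectively. For the converse, if $(\tilde{A}, \bar{\cdot}_A^{\phi_0}, \bar{\rho}_A^{\phi_0})$ is a left-symmetric algebroid, then its sub-adjacent Lie algebroid $\tilde{A}_{\phi_0}^{-}$ is automatically a Lie algebroid, and by the converse half of the Jacobi algebroid characterization recalled in Subsection \ref{Jacobi algebroids and Jacobi structures} this forces $d_A \phi_0 = 0$; Proposition \ref{cocyc} then yields the symmetry of $\delta_A \phi_0$, and likewise for the hat case. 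The main obstacle is the associator expansion: although routine, one must carefully pair terms in which $\partial_t$ acts on a coefficient of the form $\phi_0(\tilde{Y})$ with those in which $\rho_A$ acts on the corresponding quantity, so that the residual assembles cleanly into the antisymmetric part of $\delta_A \phi_0$.
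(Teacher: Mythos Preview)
Your proposal is correct, and the associator computation for $\bar{\cdot}_A^{\phi_0}$ is exactly what the paper carries out: it obtains
\[
\mathrm{As}_{\bar{\cdot}}(\tilde{X},\tilde{Y},\tilde{Z})-\mathrm{As}_{\bar{\cdot}}(\tilde{Y},\tilde{X},\tilde{Z})
=\bigl((\delta_A\phi_0)(\tilde{X},\tilde{Y})-(\delta_A\phi_0)(\tilde{Y},\tilde{X})\bigr)\,\partial_t\tilde{Z},
\]
so both directions for the bar structure follow at once from this single identity. Your route diverges from the paper in two places. First, for the hat structure you plan a parallel expansion with $e^{-t}$ bookkeeping; the paper instead observes that the vector-bundle automorphism $\Psi(v,t)=(e^tv,t)$ intertwines $\bar{\cdot}_A^{\phi_0}$ with $\hat{\cdot}_A^{\phi_0}$ and $\bar{\rho}_A^{\phi_0}$ with $\hat{\rho}_A^{\phi_0}$, so the hat case is obtained for free from the bar case without any further computation. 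Second, your converse argument (sub-adjacent Lie algebroid $\Rightarrow$ Jacobi algebroid via Subsection~\ref{Jacobi algebroids and Jacobi structures} $\Rightarrow$ $d_A\phi_0=0$ $\Rightarrow$ $\delta_A\phi_0$ symmetric via Proposition~\ref{cocyc}) is valid but unnecessarily indirect: since your own associator expansion already exhibits the obstruction as the antisymmetric part of $\delta_A\phi_0$ times $\partial_t\tilde{Z}$, choosing $\tilde{Z}$ with $\partial_t\tilde{Z}\neq 0$ recovers the converse immediately, which is how the paper proceeds. Your approach has the merit of reusing existing statements rather than extracting the converse from the computation, but the paper's route is shorter and avoids invoking the Lie-algebroid characterization as a black box.
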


\begin{proof}
For any $\tilde{X}, \tilde{Y}$ in $\Gamma(\tilde{A})$ and $\tilde{f}$ in $C^\infty(M\times \mathbb{R})$,
\begin{align*}
    \tilde{X} \:\bar{\cdot}_A^{\phi_0} \:(\tilde{f}\tilde{Y})
    &=\tilde{X} \cdot_A (\tilde{f}\tilde{Y})+\phi_0(\tilde{X})\frac{\partial \tilde{f}\tilde{Y}}{\partial t}\\
    &=\tilde{f}(\tilde{X} \cdot_A \tilde{Y})+(\rho_A(\tilde{X})\tilde{f})\tilde{Y}+\phi_0(\tilde{X})\left(\frac{\partial \tilde{f}}{\partial t}\tilde{Y}+\tilde{f}\frac{\partial \tilde{Y}}{\partial t}\right)\\
    &=\tilde{f}\left(\tilde{X} \cdot_A \tilde{Y}+\phi_0(\tilde{X})\frac{\partial \tilde{Y}}{\partial t}\right)+\left(\rho_A(\tilde{X})\tilde{f}+\phi_0(\tilde{X})\frac{\partial \tilde{f}}{\partial t}\right)\tilde{Y}\\
    &=\tilde{f}(\tilde{X} \:\bar{\cdot}_A^{\phi_0} \:\tilde{Y})+(\bar{\rho}_A^{\phi_0}(\tilde{X})\tilde{f})\tilde{Y},\\
    (\tilde{f}\tilde{X}) \:\bar{\cdot}_A^{\phi_0} \:\tilde{Y}
    &=(\tilde{f}\tilde{X}) \cdot_A\tilde{Y}+\phi_0(\tilde{f}\tilde{X})\frac{\partial \tilde{Y}}{\partial t}\\
    &=\tilde{f}(\tilde{X} \cdot_A \tilde{Y})+\tilde{f}\phi_0(\tilde{X})\frac{\partial \tilde{Y}}{\partial t}\\
    &=\tilde{f}\left(\tilde{X} \cdot_A \tilde{Y}+\phi_0(\tilde{X})\frac{\partial \tilde{Y}}{\partial t}\right)\\
    &=\tilde{f}\left(\tilde{X} \:\bar{\cdot}_A^{\phi_0} \:\tilde{Y}\right).
\end{align*}
For any $\tilde{X}, \tilde{Y}$ and $\tilde{Z}$ in $\Gamma(\tilde{A})$,
\begin{align*}
    \tilde{X} \:\bar{\cdot}_A^{\phi_0} \:(\tilde{Y} \:\bar{\cdot}_A^{\phi_0} \:\tilde{Z})
    &=\tilde{X} \:\bar{\cdot}_A^{\phi_0} \:\left(\tilde{Y}\cdot_A \tilde{Z}+\phi_0(\tilde{Y})\frac{\partial \tilde{Z}}{\partial t}\right)\\
    &=\tilde{X} \cdot_A \left(\tilde{Y}\cdot_A \tilde{Z}+\phi_0(\tilde{Y})\frac{\partial \tilde{Z}}{\partial t}\right)\\
    &\qquad +\phi_0(\tilde{X})\frac{\partial }{\partial t}\left(\tilde{Y}\cdot_A \tilde{Z}+\phi_0(\tilde{Y})\frac{\partial \tilde{Z}}{\partial t}\right)\\
    &=\tilde{X} \cdot_A (\tilde{Y}\cdot_A \tilde{Z})+\tilde{X} \cdot_A \left(\phi_0(\tilde{Y})\frac{\partial \tilde{Z}}{\partial t}\right)\\
    &\qquad +\phi_0(\tilde{X})\left(\frac{\partial \tilde{Y}}{\partial t}\cdot_A \tilde{Z}+\tilde{Y}\cdot_A \frac{\partial \tilde{Z}}{\partial t}\right)\\
    &\qquad +\phi_0(\tilde{X})\left(\phi_0\left(\frac{\partial \tilde{Y}}{\partial t}\right)\frac{\partial \tilde{Z}}{\partial t}+\phi_0(\tilde{Y})\frac{\partial^2 \tilde{Z}}{\partial t^2}\right)\\
    &=\tilde{X} \cdot_A (\tilde{Y}\cdot_A \tilde{Z})+\phi_0(\tilde{Y})\left(\tilde{X} \cdot_A \frac{\partial \tilde{Z}}{\partial t}\right)+\rho_A(\tilde{X})(\phi_0(\tilde{Y}))\frac{\partial \tilde{Z}}{\partial t}\\
    &\qquad +\phi_0(\tilde{X})\left(\frac{\partial \tilde{Y}}{\partial t}\cdot_A \tilde{Z}\right)+\phi_0(\tilde{X})\left(\tilde{Y}\cdot_A \frac{\partial \tilde{Z}}{\partial t}\right)\\
    &\qquad +\phi_0(\tilde{X})\phi_0\left(\frac{\partial \tilde{Y}}{\partial t}\right)\frac{\partial \tilde{Z}}{\partial t}+\phi_0(\tilde{X})\phi_0(\tilde{Y})\frac{\partial^2 \tilde{Z}}{\partial t^2},\\
    \tilde{Y} \:\bar{\cdot}_A^{\phi_0} \:(\tilde{X} \:\bar{\cdot}_A^{\phi_0} \:\tilde{Z})
    &=\tilde{Y} \cdot_A (\tilde{X}\cdot_A \tilde{Z})+\phi_0(\tilde{X})\left(\tilde{Y} \cdot_A \frac{\partial \tilde{Z}}{\partial t}\right)+\rho_A(\tilde{Y})(\phi_0(\tilde{X}))\frac{\partial \tilde{Z}}{\partial t}\\
    &\qquad +\phi_0(\tilde{Y})\left(\frac{\partial \tilde{X}}{\partial t}\cdot_A \tilde{Z}\right)+\phi_0(\tilde{Y})\left(\tilde{X}\cdot_A \frac{\partial \tilde{Z}}{\partial t}\right)\\
    &\qquad +\phi_0(\tilde{Y})\phi_0\left(\frac{\partial \tilde{X}}{\partial t}\right)\frac{\partial \tilde{Z}}{\partial t}+\phi_0(\tilde{Y})\phi_0(\tilde{X})\frac{\partial^2 \tilde{Z}}{\partial t^2},\\
    (\tilde{X} \:\bar{\cdot}_A^{\phi_0} \:\tilde{Y}) \:\bar{\cdot}_A^{\phi_0} \:\tilde{Z}
    &=\left(\tilde{X}\cdot_A \tilde{Y}+\phi_0(\tilde{X})\frac{\partial \tilde{Y}}{\partial t}\right)\:\bar{\cdot}_A^{\phi_0} \:\tilde{Z}\\
    &=\left(\tilde{X}\cdot_A \tilde{Y}+\phi_0(\tilde{X})\frac{\partial \tilde{Y}}{\partial t}\right)\cdot_A\tilde{Z}\\
    &\qquad +\phi_0\left(\tilde{X}\cdot_A \tilde{Y}+\phi_0(\tilde{X})\frac{\partial \tilde{Y}}{\partial t}\right)\frac{\partial \tilde{Z}}{\partial t}\\
    &=(\tilde{X}\cdot_A \tilde{Y})\cdot_A\tilde{Z}+\left(\phi_0(\tilde{X})\frac{\partial \tilde{Y}}{\partial t}\right)\cdot_A\tilde{Z}\\
    &\qquad +\phi_0(\tilde{X}\cdot_A \tilde{Y})\frac{\partial \tilde{Z}}{\partial t}+\phi_0(\tilde{X})\phi_0\left(\frac{\partial \tilde{Y}}{\partial t}\right)\frac{\partial \tilde{Z}}{\partial t}\\
    &=(\tilde{X}\cdot_A \tilde{Y})\cdot_A\tilde{Z}+\phi_0(\tilde{X})\left(\frac{\partial \tilde{Y}}{\partial t}\cdot_A\tilde{Z}\right)\\
    &\qquad +\phi_0(\tilde{X}\cdot_A \tilde{Y})\frac{\partial \tilde{Z}}{\partial t}+\phi_0(\tilde{X})\phi_0\left(\frac{\partial \tilde{Y}}{\partial t}\right)\frac{\partial \tilde{Z}}{\partial t},\\
    (\tilde{Y} \:\bar{\cdot}_A^{\phi_0} \:\tilde{X}) \:\bar{\cdot}_A^{\phi_0} \:\tilde{Z}
    &=(\tilde{Y}\cdot_A \tilde{X})\cdot_A\tilde{Z}+\phi_0(\tilde{Y})\left(\frac{\partial \tilde{X}}{\partial t}\cdot_A\tilde{Z}\right)\\
    &\qquad +\phi_0(\tilde{Y}\cdot_A \tilde{X})\frac{\partial \tilde{Z}}{\partial t}+\phi_0(\tilde{Y})\phi_0\left(\frac{\partial \tilde{X}}{\partial t}\right)\frac{\partial \tilde{Z}}{\partial t}.
\end{align*}
\begin{align*}
    \therefore\ \tilde{X}& \:\bar{\cdot}_A^{\phi_0} \:(\tilde{Y} \:\bar{\cdot}_A^{\phi_0} \:\tilde{Z})-\tilde{Y} \:\bar{\cdot}_A^{\phi_0} \:(\tilde{X} \:\bar{\cdot}_A^{\phi_0} \:\tilde{Z})-(\tilde{X} \:\bar{\cdot}_A^{\phi_0} \:\tilde{Y}) \:\bar{\cdot}_A^{\phi_0} \:\tilde{Z}+(\tilde{Y} \:\bar{\cdot}_A^{\phi_0} \:\tilde{X}) \:\bar{\cdot}_A^{\phi_0} \:\tilde{Z}\\
    &=\tilde{X} \cdot_A (\tilde{Y}\cdot_A \tilde{Z})-\tilde{Y} \cdot_A(\tilde{X} \cdot_A\tilde{Z})-(\tilde{X} \cdot_A\tilde{Y}) \cdot_A\tilde{Z}+(\tilde{Y} \cdot_A\tilde{X}) \cdot_A\tilde{Z}\\
    &\qquad +(\rho_A(\tilde{X})(\phi_0(\tilde{Y}))-\phi_0(\tilde{X}\cdot_A\tilde{Y}))\frac{\partial \tilde{Z}}{\partial t}\\
    &\qquad -(\rho_A(\tilde{Y})(\phi_0(\tilde{X}))-\phi_0(\tilde{Y}\cdot_A\tilde{X}))\frac{\partial \tilde{Z}}{\partial t}\\
    &=(\delta_A\phi_0)(\tilde{X},\tilde{Y})\frac{\partial \tilde{Z}}{\partial t}-(\delta_A\phi_0)(\tilde{Y},\tilde{X})\frac{\partial \tilde{Z}}{\partial t}.
\end{align*}
Here 
the last step follows from that $\cdot_A$ is a left-symmetric algebra structure on $\Gamma(A)$. Since $\tilde{Z}$ is arbitrary, the triple $(\tilde{A},\hat{\cdot}_A^{\phi_0},\hat{\rho}_A^{\phi_0})$ is a left-symmetric algebroid over $M\times \mathbb{R}$ if and only if $(A,\phi_0)$ is a Jacobi-left-symmetric algebroid over $M$, i.e., $\delta_{A}\phi_0$ is symmetric. The bundle map $\Psi:\tilde{A}\rightarrow \tilde{A}, (v,t)\mapsto (e^t v,t)$, is an isomorphism of vector bundles and
\begin{align*}
\hat{\rho}_A^{\phi_0}\circ \Psi=\bar{\rho}_A^{\phi_0},\qquad 
\Psi(\tilde{X}\:\bar{\cdot}_A^{\phi_0}\:\tilde{Y})=\Psi(\tilde{X})\:\hat{\cdot}_A^{\phi_0}\:\Psi(\tilde{Y}).
\end{align*}
Therefore $(\tilde{A},\hat{\cdot}_A^{\phi_0},\hat{\rho}_A^{\phi_0})$ is a left-symmetric algebroid if and only if $(\tilde{A},\bar{\cdot}_A^{\phi_0},\bar{\rho}_A^{\phi_0})$ is a left-symmetric algebroid. The last claim is obvious by (\ref{hat kakko}), (\ref{hat anchor}), (\ref{bar kakko}), (\ref{bar anchor}), (\ref{hat dot}) and (\ref{bar dot}).
\end{proof}


\begin{theorem}\label{Koszul-Vinbergization}
Let $(A,\phi_0)$ be a Jacobi-left-symmetric algebroid over $M$ and $h$ an element in $\Gamma(S^2A)$. Then $h$ in $\Gamma (S^2A)$ is a Jacobi-Koszul-Vinberg structure on $(A,\phi_0)$ if and only if $\tilde{h}:=e^{-t}h$ in $\Gamma (S^2\tilde{A})$ is a Koszul-Vinberg structure on the left-symmetric algebroid $(\tilde{A},\bar{\cdot}_A^{\phi_0},\bar{\rho}_A^{\phi_0})$. 
\end{theorem}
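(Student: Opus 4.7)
The plan is to establish the pointwise identity
\begin{equation*}
\llbracket \tilde{h}, \tilde{h} \rrbracket_{\bar{A}}(\tilde{\alpha}, \tilde{\beta}, \tilde{\gamma}) = e^{-2t}\,\llbracket h, h \rrbracket_A^{\phi_0}(\alpha, \beta, \gamma),
\end{equation*}
where $\bar{A}$ denotes the left-symmetric algebroid $(\tilde{A}, \bar{\cdot}_A^{\phi_0}, \bar{\rho}_A^{\phi_0})$ produced by Proposition \ref{Jacobi-lsa to lsa}, and $\tilde{\alpha}, \tilde{\beta}, \tilde{\gamma} \in \Gamma(\tilde{A}^*)$ are the time-independent lifts of $\alpha, \beta, \gamma \in \Gamma(A^*)$. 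Since $\llbracket \tilde{h}, \tilde{h} \rrbracket_{\bar{A}}$ is $C^\infty(M \times \mathbb{R})$-linear in each argument and $\Gamma(\tilde{A}^*)$ is generated over $C^\infty(M \times \mathbb{R})$ by such lifts, the identity propagates to all arguments, and then the non-vanishing of $e^{-2t}$ yields the biconditional claimed in the theorem. This is the direct symmetric analogue of the computation $[\tilde{\pi}, \tilde{\pi}\bar{]}_A^{\phi_0} = e^{-2t}[\pi, \pi]_{A,\phi_0}$ recalled in Subsection \ref{Jacobi algebroids and Jacobi structures}.

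First I would record $\tilde{h}^\sharp \tilde{\alpha} = e^{-t} h^\sharp \alpha$ and note that $\partial_t$ annihilates every time-independent $A$-section while producing $-e^{-t}$ from the conformal factor. Next I would expand each of the five summands in the definition of $\llbracket \tilde{h}, \tilde{h} \rrbracket_{\bar{A}}(\tilde{\alpha}, \tilde{\beta}, \tilde{\gamma})$ using formula (\ref{bar dot}) for $\bar{\cdot}_A^{\phi_0}$, (\ref{bar anchor}) for $\bar{\rho}_A^{\phi_0}$, and (\ref{bar kakko}) for the sub-adjacent commutator $[\cdot, \cdot\bar{]}_A^{\phi_0}$. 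For the two anchor-derivative terms, the extra piece $\phi_0(\tilde{X})\partial_t$ in $\bar{\rho}_A^{\phi_0}$ produces, via $\partial_t(e^{-t}) = -e^{-t}$, a correction of the form $\mp e^{-2t} h(\phi_0, \cdot)h(\cdot, \cdot)$ on top of the expected $\pm e^{-2t}\rho_A(h^\sharp \cdot)h(\cdot, \cdot)$. For the two $\bar{\cdot}_A^{\phi_0}$-product terms, applying (\ref{left-Leib}) and (\ref{right-lin}) together with the vanishing of $\rho_A(h^\sharp \cdot)(e^{-t})$ yields $e^{-2t}$ times the $\cdot_A$-analogue plus a $\phi_0$-tail. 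The commutator term is handled the same way via (\ref{bar kakko}).

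Collecting, the leading pieces reassemble into $e^{-2t}\llbracket h, h \rrbracket_A(\alpha, \beta, \gamma)$. The surviving $\phi_0$-corrections (two from each of the three groups above, so six in total) must condense, after sign-bookkeeping, into $e^{-2t}\bigl(h(\phi_0, \alpha)h(\beta, \gamma) - h(\phi_0, \beta)h(\alpha, \gamma)\bigr)$, which is precisely the twist relating $\llbracket h, h \rrbracket_A^{\phi_0}$ to $\llbracket h, h \rrbracket_A$. The only real obstacle is the bookkeeping of these six auxiliary $\phi_0$-terms: three contribute to the coefficient of $h(\phi_0, \alpha) h(\beta, \gamma)$ with signs summing to $+1$, and the other three to $h(\phi_0, \beta) h(\alpha, \gamma)$ with signs summing to $-1$. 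No structural hypothesis beyond the symmetry of $\delta_A \phi_0$ is required, and that is already built into Proposition \ref{Jacobi-lsa to lsa}, which guarantees $\bar{A}$ is indeed a left-symmetric algebroid so that $\llbracket \cdot, \cdot \rrbracket_{\bar{A}}$ is well defined.
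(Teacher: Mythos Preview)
Your proposal is correct and follows essentially the same approach as the paper: a direct expansion of the five summands of $\llbracket \tilde{h},\tilde{h}\rrbracket_{\bar{A}}$ using (\ref{bar dot}), (\ref{bar anchor}), (\ref{bar kakko}), followed by collecting the $\phi_0$-corrections into the twist term, yielding $\llbracket \tilde{h},\tilde{h}\rrbracket_{\bar{A}}=e^{-2t}\llbracket h,h\rrbracket_A^{\phi_0}$. The only cosmetic difference is that the paper carries out the computation for arbitrary time-dependent $\tilde{\alpha},\tilde{\beta},\tilde{\gamma}$ (the extra $\partial_t$-of-section terms all cancel), whereas you first restrict to time-independent lifts and invoke the tensoriality of $\llbracket\,\cdot\,,\,\cdot\,\rrbracket$; this is a legitimate and slightly cleaner variant of the same calculation.
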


\begin{proof} 
For any $\tilde{\alpha}, \tilde{\beta}$ and $\tilde{\gamma}$ in $\Gamma(\tilde{A}^*)$,
\begin{align*}
    \llbracket \tilde{h}&,\tilde{h} \rrbracket_{\tilde{A}}(\tilde{\alpha}, \tilde{\beta}, \tilde{\gamma})\\
    &=\bar{\rho}_A^{\phi_0}(\tilde{h}^{\sharp}\tilde{\alpha}) \tilde{h}(\tilde{\beta},\tilde{\gamma})
    -\bar{\rho}_A^{\phi_0}(\tilde{h}^{\sharp}\tilde{\beta}) \tilde{h}(\tilde{\alpha},\tilde{\gamma})
    +\langle \tilde{\alpha}, \tilde{h}^{\sharp}\tilde{\beta} \:\bar{\cdot}_A^{\phi_0} \:\tilde{h}^{\sharp}\tilde{\gamma} \rangle 
    -\langle \tilde{\beta}, \tilde{h}^{\sharp}\tilde{\alpha} \:\bar{\cdot}_A^{\phi_0} \:\tilde{h}^{\sharp}\tilde{\gamma} \rangle \\
    &\qquad -\langle \tilde{\gamma}, [\tilde{h}^{\sharp}\tilde{\alpha}, \tilde{h}^{\sharp}\tilde{\beta} \bar{]}_A^{\phi_0} \rangle\\
    &=\left(\rho_A(e^{-t}h^{\sharp}\tilde{\alpha})+\phi_0(e^{-t}h^{\sharp}\tilde{\alpha})\frac{\partial}{\partial t}\right) \left(e^{-t}h(\tilde{\beta},\tilde{\gamma})\right)\\
    &\qquad -\left(\rho_A(e^{-t}h^{\sharp}\tilde{\beta})+\phi_0(e^{-t}h^{\sharp}\tilde{\beta})\frac{\partial}{\partial t}\right) \left(e^{-t}h(\tilde{\alpha},\tilde{\gamma})\right)\\
    &\qquad +\left\langle \tilde{\alpha}, (e^{-t}h^{\sharp}\tilde{\beta}) \cdot_A(e^{-t}h^{\sharp}\tilde{\gamma})+\phi_0(e^{-t}h^{\sharp}\tilde{\beta})\frac{\partial e^{-t}h^{\sharp}\tilde{\gamma}}{\partial t} \right\rangle\\
    &\qquad -\left\langle \tilde{\beta}, (e^{-t}h^{\sharp}\tilde{\alpha}) \cdot_A(e^{-t}h^{\sharp}\tilde{\gamma})+\phi_0(e^{-t}h^{\sharp}\tilde{\alpha})\frac{\partial e^{-t}h^{\sharp}\tilde{\gamma}}{\partial t} \right\rangle\\
    &\qquad -\left\langle \tilde{\gamma}, [e^{-t}h^{\sharp}\tilde{\alpha}, e^{-t}h^{\sharp}\tilde{\beta} ]_A\right.\\
    &\qquad \qquad \qquad \left. +\phi_0(e^{-t}h^{\sharp}\tilde{\alpha})\frac{\partial e^{-t}h^{\sharp}\tilde{\beta}}{\partial t}-\phi_0(e^{-t}h^{\sharp}\tilde{\beta})\frac{\partial e^{-t}h^{\sharp}\tilde{\alpha}}{\partial t} \right\rangle\\
    &=e^{-2t}\rho_A(h^{\sharp}\tilde{\alpha})\left(h(\tilde{\beta},\tilde{\gamma})\right)\\
    &\qquad +e^{-t}\phi_0(h^{\sharp}\tilde{\alpha})\left(-e^{-t}h(\tilde{\beta},\tilde{\gamma})+e^{-t}h\left(\frac{\partial \tilde{\beta}}{\partial t},\tilde{\gamma}\right)+e^{-t}h\left(\tilde{\beta},\frac{\partial \tilde{\gamma}}{\partial t}\right)\right)\\
    &\qquad -e^{-2t}\rho_A(h^{\sharp}\tilde{\beta})\left(h(\tilde{\alpha},\tilde{\gamma})\right)\\
    &\qquad -e^{-t}\phi_0(h^{\sharp}\tilde{\beta})\left(-e^{-t}h(\tilde{\alpha},\tilde{\gamma})+e^{-t}h\left(\frac{\partial \tilde{\alpha}}{\partial t},\tilde{\gamma}\right)+e^{-t}h\left(\tilde{\alpha},\frac{\partial \tilde{\gamma}}{\partial t}\right)\right)\\
    &\qquad +e^{-2t}\langle \tilde{\alpha}, h^{\sharp}\tilde{\beta}\cdot_A h^{\sharp}\tilde{\gamma}\rangle +e^{-t}\phi_0(h^{\sharp}\tilde{\beta})\left\langle \tilde{\alpha}, -e^{-t}h^{\sharp}\tilde{\gamma}+e^{-t}h^{\sharp}\frac{\partial \tilde{\gamma}}{\partial t}\right\rangle\\
    &\qquad -e^{-2t}\langle \tilde{\beta}, h^{\sharp}\tilde{\alpha}\cdot_A h^{\sharp}\tilde{\gamma}\rangle -e^{-t}\phi_0(h^{\sharp}\tilde{\alpha})\left\langle \tilde{\beta}, -e^{-t}h^{\sharp}\tilde{\gamma}+e^{-t}h^{\sharp}\frac{\partial \tilde{\gamma}}{\partial t}\right\rangle\\
    &\qquad -e^{-2t}\langle \tilde{\gamma}, [h^{\sharp}\tilde{\alpha}, h^{\sharp}\tilde{\beta} ]_A\rangle -e^{-t}\phi_0(h^{\sharp}\tilde{\alpha})\left\langle \tilde{\gamma}, -e^{-t}h^{\sharp}\tilde{\beta}+e^{-t}h^{\sharp}\frac{\partial \tilde{\beta}}{\partial t}\right\rangle\\
    &\qquad +e^{-t}\phi_0(h^{\sharp}\tilde{\beta})\left\langle \tilde{\gamma}, -e^{-t}h^{\sharp}\tilde{\alpha}+e^{-t}h^{\sharp}\frac{\partial \tilde{\alpha}}{\partial t}\right\rangle\\
    &=e^{-2t}\left(\rho_A(h^{\sharp}\tilde{\alpha})\left(h(\tilde{\beta},\tilde{\gamma})\right)-\rho_A(h^{\sharp}\tilde{\beta})\left(h(\tilde{\alpha},\tilde{\gamma})\right)\right.\\
    &\qquad \qquad  +\langle \tilde{\alpha}, h^{\sharp}\tilde{\beta}\cdot_A h^{\sharp}\tilde{\gamma}\rangle-\langle \tilde{\beta}, h^{\sharp}\tilde{\alpha}\cdot_A h^{\sharp}\tilde{\gamma}\rangle-\langle \tilde{\gamma}, [h^{\sharp}\tilde{\alpha}, h^{\sharp}\tilde{\beta} ]_A\rangle \\
    &\qquad \qquad \left. 
    +h(\phi_0,\tilde{\alpha})h(\tilde{\beta},\tilde{\gamma})-h(\phi_0,\tilde{\beta})h(\tilde{\gamma},\tilde{\alpha}) \right)\\
    &=e^{-2t}(\llbracket h,h\rrbracket_A(\tilde{\alpha},\tilde{\beta},\tilde{\gamma})+h(\phi_0,\tilde{\alpha})h(\tilde{\beta},\tilde{\gamma})-h(\phi_0,\tilde{\beta})h(\tilde{\gamma},\tilde{\alpha}))\\
    &=e^{-2t}\llbracket h,h \rrbracket_A^{\phi_0}(\tilde{\alpha},\tilde{\beta},\tilde{\gamma}).
\end{align*}
Therefore $h$ is a Jacobi-Koszul-Vinberg structure on $(A,\phi_0)$ if and only if $\tilde{h}$ is a Koszul-Vinberg structure on $(\tilde{A},\bar{\cdot}_A^{\phi_0},\bar{\rho}_A^{\phi_0})$.
\end{proof}

\begin{defn}
$\tilde{h}$ in Theorem \ref{Koszul-Vinbergization} is called the {\it Koszul-Vinbergization} of a Jacobi-Koszul-Vinberg structure $h$ on $(A,\phi_0)$.    
\end{defn}

Now we define a Jacobi-Koszul-Vinberg manifold as a symmetric analogue of a Jacobi manifold.
\begin{defn}\label{JKV}
A {\it Jacobi-Koszul-Vinberg structure} on an affine manifold $(M, \nabla)$ is 
a pair $(h ,E)$ in $\Gamma (S^{2}TM)\times \mathfrak{X}(M)$ satisfying, \\
for any $\alpha,\beta,\gamma$ in $\Omega^1(M)$,
\begin{align*}
({\rm i}) \; &(\nabla_{h^{\sharp}\alpha} h)(\beta,\gamma)-\langle \alpha, E \rangle h(\beta,\gamma) 
=(\nabla_{h^{\sharp}\beta} h)(\alpha,\gamma)-\langle \beta, E \rangle h(\alpha,\gamma), \\
({\rm ii}) \; &(\nabla_{E} h)(\beta,\gamma)-\langle \gamma, \nabla_{h^{\sharp}\beta} E \rangle
+\langle \beta, E \rangle \langle \gamma, E \rangle=0, \\ 
({\rm iii}) \; &\nabla_E E=0. 
\end{align*}
$(M, \nabla, h ,E)$ is called a {\it Jacobi-Koszul-Vinberg manifold}.
\end{defn}

Let $(M,\nabla)$ be an affine manifold.  As previously stated in Example \ref{barnabla}, $(TM\oplus \mathbb{R},\bar{\nabla},\mbox{pr}_1)$ is a left-symmetric algebroid over $M$.  Furthermore, because of Example \ref{01cocy} and Proposition \ref{cocyc}, $((TM\oplus \mathbb{R},\bar{\nabla},\mbox{pr}_1),(0,1))$ is a Jacobi-left-symmetric algebroid over $M$.

\begin{prop}
Let $(M, \nabla)$ be an affine manifold.  
Identify $(h ,E)$ in $\Gamma (S^{2}TM)\times \mathfrak{X}(M)$ with $H$ in $\Gamma (S^{2}(TM\oplus \mathbb{R}))$ by 
$\displaystyle H = h + \frac{\partial}{\partial t} \otimes E+ E\otimes \frac{\partial}{\partial t}$. Under this identification, the followings are equivalent; 
\begin{align*}
&(1)\; (h ,E) \;\text{is a Jacobi-Koszul-Vinberg structure on}\; (M, \nabla) \\
&(2)\; \llbracket H,H \rrbracket_{TM\oplus \mathbb{R}}^{(0,1)}=0.
\end{align*}
\end{prop}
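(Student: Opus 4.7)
The plan is to verify the equivalence by unpacking $\llbracket H,H\rrbracket_{TM\oplus\mathbb{R}}^{(0,1)}$ directly and matching its independent components to the three conditions in Definition \ref{JKV}. Since this tensor is $C^\infty(M)$-trilinear and skew-symmetric in its first two slots, checking its vanishing is equivalent to checking its vanishing on all triples of sections of $(TM\oplus\mathbb{R})^*$ whose entries are chosen from the spanning set $\{(\alpha,0) : \alpha\in \Omega^1(M)\}\cup\{\phi_0\}$, where $\phi_0=(0,1)$. After discarding triples that vanish for trivial reasons (two $\phi_0$'s in the first two slots), exactly four patterns remain to examine.

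The first step is to record the basic data. Using $\bar{\nabla}_{(X,f)}(Y,g)=(\nabla_X Y, Xg)$, the sub-adjacent Lie bracket $[(X,f),(Y,g)]=([X,Y],Xg-Yf)$, the identity $H^\sharp(\alpha,f)=(h^\sharp\alpha+fE,\langle\alpha,E\rangle)$ (so that $H^\sharp\phi_0=(E,0)$) and $H(\phi_0,(\alpha,f))=\langle \alpha,E\rangle$, substitution into the defining formula for $\llbracket H,H\rrbracket^{(0,1)}_{TM\oplus\mathbb{R}}$ collapses each case into an expression in $h$, $E$, and $\nabla$. The case $((\alpha,0),(\beta,0),(\gamma,0))$ reduces to $\llbracket h,h\rrbracket_{TM_\nabla}(\alpha,\beta,\gamma)+\langle \alpha,E\rangle h(\beta,\gamma)-\langle \beta,E\rangle h(\alpha,\gamma)$, whose vanishing is condition (i). The case $((\alpha,0),\phi_0,(\gamma,0))$ reduces to $(\nabla_E h)(\alpha,\gamma)-\langle \gamma,\nabla_{h^\sharp\alpha}E\rangle+\langle\alpha,E\rangle\langle\gamma,E\rangle$, which is exactly (ii). The case $((\alpha,0),\phi_0,\phi_0)$ reduces to $\langle \alpha,\nabla_E E\rangle$, giving (iii). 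The remaining case $((\alpha,0),(\beta,0),\phi_0)$ reduces to $\langle \alpha,\nabla_{h^\sharp\beta}E\rangle-\langle \beta,\nabla_{h^\sharp\alpha}E\rangle$, which is not an independent constraint: solving (ii) for $\langle \gamma,\nabla_{h^\sharp\beta}E\rangle$ and invoking the symmetry of $\nabla_E h$ in its two slots (inherited from the symmetry of $h$) gives $\langle \gamma,\nabla_{h^\sharp\beta}E\rangle=\langle \beta,\nabla_{h^\sharp\gamma}E\rangle$, so this case vanishes automatically once (ii) holds.

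The main computational obstacle is the reduction of the second case above. One uses torsion-freeness of $\nabla$ to write $\langle \gamma,[h^\sharp\alpha,E]\rangle=\langle \gamma,\nabla_{h^\sharp\alpha}E\rangle-\langle \gamma,\nabla_E h^\sharp\alpha\rangle$, then expands $\langle \gamma,\nabla_E h^\sharp\alpha\rangle = E\,h(\gamma,\alpha)-h(\nabla_E\gamma,\alpha)$ and analogously $\langle \alpha,\nabla_E h^\sharp\gamma\rangle = E\,h(\alpha,\gamma)-h(\nabla_E\alpha,\gamma)$ via duality, and collects the result into $(\nabla_E h)(\alpha,\gamma)$ using the symmetry of $h$. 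The remaining cases require only routine substitutions. With the four computations in hand, the forward implication follows by substituting (i)--(iii), while the converse is the observation that (i)--(iii) can be read directly off the first three cases (the fourth vanishing automatically as above).
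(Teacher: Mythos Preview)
Your argument is correct, and the underlying calculations are the same as the paper's; the difference is purely organizational. The paper evaluates $\llbracket H,H\rrbracket_{TM\oplus\mathbb{R}}^{(0,1)}$ on general arguments $(\alpha_i,f_i)$, expands everything, and arrives at a single expression which is a polynomial in $f_1,f_2,f_3$; the equivalence with (i)--(iii) is then obtained by reading off the coefficients of $1$, $f_1$, $f_2$, $f_1f_3$, $f_2f_3$ and noting that the pure $f_3$ coefficient is redundant. You instead invoke $C^\infty(M)$-trilinearity at the outset and test only on the spanning set $\{(\alpha,0)\}\cup\{\phi_0\}$, which amounts to plugging in $f_i\in\{0,1\}$ and yields the same four independent computations. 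Your treatment has the minor advantage that the redundancy of Case~4 (your $((\alpha,0),(\beta,0),\phi_0)$, equivalently the paper's $f_3$ coefficient) is made explicit via the symmetry of $(\nabla_E h)(\cdot,\cdot)$, whereas the paper leaves this step to the reader.
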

\begin{proof}
Since $(\alpha, f)$ in $\Omega^1(M)\times C^{\infty}(M)$ is identified with $\alpha+fdt$ in $\Gamma((TM\oplus \mathbb{R})^*)$, 
the followings hold:
\begin{align*}
H((\alpha_1, f_1),(\alpha_2, f_2))&=h(\alpha_1,\alpha_2)+\langle \alpha_1,E \rangle f_2+\langle \alpha_2,E \rangle f_1, \\
H^{\sharp}(\alpha_1, f_1)&=(h^{\sharp}\alpha_1 +f_1 E, \langle \alpha_1, E\rangle), \\
H^{\sharp}(\alpha_1, f_1)\cdot_{TM\oplus \mathbb{R}} H^{\sharp}(\alpha_2, f_2)
&=\bar{\nabla}_{H^{\sharp}(\alpha_1, f_1)}H^{\sharp}(\alpha_2, f_2) \\
&=(\nabla_{h^{\sharp}\alpha_1 +f_1 E}(h^{\sharp}\alpha_2 +f_2 E), (h^{\sharp}\alpha_1 +f_1 E)\langle \alpha_2,E \rangle),
\end{align*}
for $(\alpha_i, f_i)$ in $\Omega^1(M)\times C^{\infty}(M) \quad (i=1,2)$.  By using these equalities, it follows that for $(\alpha_i, f_i)$ in $\Omega^1(M)\times C^{\infty}(M) \quad (i=1,2,3)$,
\begin{align*}
&\llbracket H,H \rrbracket_{TM\oplus \mathbb{R}} 
((\alpha_1, f_1),(\alpha_2, f_2),(\alpha_3, f_3)) \\
=&\;(\mbox{pr}_1\circ H^{\sharp}(\alpha_1, f_1)) H((\alpha_2, f_2),(\alpha_3, f_3)) \\
&-(\mbox{pr}_1\circ H^{\sharp}(\alpha_2, f_2)) H((\alpha_1, f_1),(\alpha_3, f_3)) \\
&-\langle (\alpha_2, f_2),\: H^{\sharp}(\alpha_1, f_1)\cdot_{TM\oplus \mathbb{R}}H^{\sharp}(\alpha_3, f_3)\rangle \\
&+\langle (\alpha_1, f_1),\: H^{\sharp}(\alpha_2, f_2)\cdot_{TM\oplus \mathbb{R}}H^{\sharp}(\alpha_3, f_3)\rangle \\
&-\langle (\alpha_3, f_3),\: [H^{\sharp}(\alpha_1, f_1), H^{\sharp}(\alpha_2, f_2)]_{TM\oplus \mathbb{R}} \rangle \\
=&\; (h^{\sharp}\alpha_1 +f_1 E)(h(\alpha_2,\alpha_3)+\langle \alpha_2,E \rangle f_3+\langle \alpha_3,E \rangle f_2) \\
&-(h^{\sharp}\alpha_2 +f_2 E)(h(\alpha_1,\alpha_3)+\langle \alpha_1,E \rangle f_3+\langle \alpha_3,E \rangle f_1) \\
&-(\langle \alpha_2, \:\nabla_{h^{\sharp}\alpha_1 +f_1 E}(h^{\sharp}\alpha_3 +f_3 E)\rangle +f_2(h^{\sharp}\alpha_1 +f_1 E)\langle \alpha_3, E\rangle) \\
&+(\langle \alpha_1, \:\nabla_{h^{\sharp}\alpha_2 +f_2 E}(h^{\sharp}\alpha_3 +f_3 E)\rangle +f_1(h^{\sharp}\alpha_2 +f_2 E)\langle \alpha_3, E\rangle) \\
&-(\langle \alpha_3, \:[h^{\sharp}\alpha_1 +f_1 E, \:h^{\sharp}\alpha_2 +f_2 E]\rangle \\
&\quad\quad +f_3\{(h^{\sharp}\alpha_1 +f_1 E)\langle \alpha_2, E\rangle-(h^{\sharp}\alpha_2 +f_2 E)\langle \alpha_1, E\rangle\}) \\
=&\; (h^{\sharp}\alpha_1 +f_1 E)h(\alpha_2,\alpha_3) 
+\langle \alpha_2, E\rangle (h^{\sharp}\alpha_1 +f_1 E)f_3
+\langle \alpha_3, E\rangle (h^{\sharp}\alpha_1 +f_1 E)f_2 \\
&-(h^{\sharp}\alpha_2 +f_2 E)h(\alpha_1,\alpha_3) 
-\langle \alpha_1, E\rangle (h^{\sharp}\alpha_2 +f_2 E)f_3
-\langle \alpha_3, E\rangle (h^{\sharp}\alpha_2 +f_2 E)f_1 \\
&-\langle \alpha_2, \:\nabla_{h^{\sharp}\alpha_1 +f_1 E}(h^{\sharp}\alpha_3 +f_3 E)\rangle
+\langle \alpha_1, \:\nabla_{h^{\sharp}\alpha_2 +f_2 E}(h^{\sharp}\alpha_3 +f_3 E)\rangle \\
&-\langle \alpha_3, \:[h^{\sharp}\alpha_1 +f_1 E, \:h^{\sharp}\alpha_2 +f_2 E]\rangle \\
=&\;(h^{\sharp}\alpha_1 +f_1 E)h(\alpha_2,\alpha_3) 
+\langle \alpha_3, E\rangle (h^{\sharp}\alpha_1 +f_1 E)f_2 \\
&-(h^{\sharp}\alpha_2 +f_2 E)h(\alpha_1,\alpha_3) 
-\langle \alpha_3, E\rangle (h^{\sharp}\alpha_2 +f_2 E)f_1 \\
&-\langle \alpha_2, \:\nabla_{h^{\sharp}\alpha_1 +f_1 E}\:h^{\sharp}\alpha_3\rangle -f_3 \langle \alpha_2, \:\nabla_{h^{\sharp}\alpha_1 +f_1 E}\:E\rangle \\
&+\langle \alpha_1, \:\nabla_{h^{\sharp}\alpha_2 +f_2 E}\:h^{\sharp}\alpha_3\rangle +f_3 \langle \alpha_1, \:\nabla_{h^{\sharp}\alpha_2 +f_2 E}\:E\rangle \\
&-\langle \alpha_3, \:\nabla_{h^{\sharp}\alpha_1 +f_1 E}(h^{\sharp}\alpha_2 +f_2 E)-\nabla_{h^{\sharp}\alpha_2 +f_2 E}(h^{\sharp}\alpha_1 +f_1 E)\rangle \\
=&\;(h^{\sharp}\alpha_1 +f_1 E)h(\alpha_2,\alpha_3)
-(h^{\sharp}\alpha_2 +f_2 E)h(\alpha_1,\alpha_3) \\ 
&-\langle \alpha_2, \:\nabla_{h^{\sharp}\alpha_1 +f_1 E}\:h^{\sharp}\alpha_3\rangle -f_3 \langle \alpha_2, \:\nabla_{h^{\sharp}\alpha_1 +f_1 E}\:E\rangle \\
&+\langle \alpha_1, \:\nabla_{h^{\sharp}\alpha_2 +f_2 E}\:h^{\sharp}\alpha_3\rangle +f_3 \langle \alpha_1, \:\nabla_{h^{\sharp}\alpha_2 +f_2 E}\:E\rangle \\
&-\langle \alpha_3, \:\nabla_{h^{\sharp}\alpha_1 +f_1 E}\:h^{\sharp}\alpha_2\rangle -f_2 \langle \alpha_3, \:\nabla_{h^{\sharp}\alpha_1 +f_1 E}\:E\rangle \\
&+\langle \alpha_3, \:\nabla_{h^{\sharp}\alpha_2 +f_2 E}\:h^{\sharp}\alpha_1\rangle +f_1 \langle \alpha_3, \:\nabla_{h^{\sharp}\alpha_2 +f_2 E}\:E\rangle \\
=&-(\nabla_{h^{\sharp}\alpha_1 +f_1 E}\:h)(\alpha_2,\alpha_3)
+(\nabla_{h^{\sharp}\alpha_2 +f_2 E}\:h)(\alpha_1,\alpha_3) \\
&-f_3 \langle \alpha_2, \:\nabla_{h^{\sharp}\alpha_1 +f_1 E}\:E\rangle +f_3 \langle \alpha_1, \:\nabla_{h^{\sharp}\alpha_2 +f_2 E}\:E\rangle \\
&-f_2 \langle \alpha_3, \:\nabla_{h^{\sharp}\alpha_1 +f_1 E}\:E\rangle +f_1 \langle \alpha_3, \:\nabla_{h^{\sharp}\alpha_2 +f_2 E}\:E\rangle \\
=&-(\nabla_{h^{\sharp}\alpha_1}h)(\alpha_2,\alpha_3)
+(\nabla_{h^{\sharp}\alpha_2}h)(\alpha_1,\alpha_3) \\
&-f_1((\nabla_E h)(\alpha_2,\alpha_3)-\langle \alpha_3, \:\nabla_{h^{\sharp}\alpha_2}E\rangle) \\
&+f_2((\nabla_E h)(\alpha_1,\alpha_3)-\langle \alpha_3, \:\nabla_{h^{\sharp}\alpha_1}E\rangle) \\
&+f_3(\langle \alpha_1, \:\nabla_{h^{\sharp}\alpha_2}E\rangle
-\langle \alpha_2, \:\nabla_{h^{\sharp}\alpha_1}E\rangle
+f_2\langle \alpha_1, \:\nabla_E E\rangle
-f_1\langle \alpha_2, \:\nabla_E E\rangle). 
\end{align*}
Hence
\begin{align*}
&\llbracket H,H \rrbracket_{TM\oplus \mathbb{R}}^{(0,1)} ((\alpha_1, f_1),(\alpha_2, f_2),(\alpha_3, f_3))  \\
=&\: \llbracket H,H \rrbracket_{TM\oplus \mathbb{R}} 
((\alpha_1, f_1),(\alpha_2, f_2),(\alpha_3, f_3)) \\
&+H((0,1),(\alpha_1, f_1))H((\alpha_2, f_2),(\alpha_3, f_3)) \\
&-H((0,1),(\alpha_2, f_2))H((\alpha_1, f_1),(\alpha_3, f_3)) \\
=&\: \llbracket H,H \rrbracket_{TM\oplus \mathbb{R}} 
((\alpha_1, f_1),(\alpha_2, f_2),(\alpha_3, f_3)) \\
&+\langle \alpha_1,E\rangle(h(\alpha_2,\alpha_3)+\langle \alpha_2,E \rangle f_3+\langle \alpha_3,E \rangle f_2) \\
&-\langle \alpha_2,E\rangle(h(\alpha_1,\alpha_3)+\langle \alpha_1,E \rangle f_3+\langle \alpha_3,E \rangle f_1) \\
=&\: \llbracket H,H \rrbracket_{TM\oplus \mathbb{R}} 
((\alpha_1, f_1),(\alpha_2, f_2),(\alpha_3, f_3)) \\
&+\langle \alpha_1,E\rangle(h(\alpha_2,\alpha_3)
+\langle \alpha_3,E \rangle f_2) 
-\langle\alpha_2,E\rangle(h(\alpha_1,\alpha_3)
+\langle \alpha_3,E \rangle f_1) \\
=&-(\nabla_{h^{\sharp}\alpha_1}h)(\alpha_2,\alpha_3)
+\langle \alpha_1,E\rangle h(\alpha_2,\alpha_3) \\
&+(\nabla_{h^{\sharp}\alpha_2}h)(\alpha_1,\alpha_3) 
-\langle\alpha_2,E\rangle h(\alpha_1,\alpha_3) \\
&-f_1((\nabla_E h)(\alpha_2,\alpha_3)
-\langle \alpha_3, \:\nabla_{h^{\sharp}\alpha_2}E\rangle 
+\langle\alpha_2,E\rangle \langle\alpha_3,E\rangle) \\
&+f_2((\nabla_E h)(\alpha_1,\alpha_3)
-\langle \alpha_3, \:\nabla_{h^{\sharp}\alpha_1}E\rangle
+\langle\alpha_1,E\rangle \langle\alpha_3,E\rangle) \\
&+f_3(\langle \alpha_1, \:\nabla_{h^{\sharp}\alpha_2}E\rangle
-\langle \alpha_2, \:\nabla_{h^{\sharp}\alpha_1}E\rangle
+f_2\langle \alpha_1, \:\nabla_E E\rangle
-f_1\langle \alpha_2, \:\nabla_E E\rangle). 
\end{align*}
Therefore, this vanishes for any $\alpha_i$ in $\Omega^1(M)$ and any $f_i$ in $C^{\infty}(M) \quad (i=1,2,3)$ if and only if (i)-(iii) in Definition \ref{JKV} hold 
for any $\alpha,\beta,\gamma$ in $\Omega^1(M)$.
\end{proof}

Let $(M,g)$ be a (pesudo-)Riemannian manifold, $\nabla$ a torsion-free connection on $M$ and $\theta$ a 1-form on $M$.  The quadruple $(M,\nabla,g,\theta)$ is called a {\it semi-Weyl manifold} \cite{Mat} if 
$\nabla g+\theta \otimes g$ is symmetric.  
A semi-Weyl manifold $(M,\nabla,g,\theta)$ is called a {\it locally conformally Hessian manifold} \cite{Osi} if $\nabla$ is flat and $\theta$ is $d$-closed in addition. 
\begin{prop}
Let $(M, \nabla, h ,E)$ be a Jacobi-Koszul-Vinberg manifold. Assume $h$ in $\Gamma (S^{2}TM)$ is non-degenerate.  
Then $(M, \nabla, g ,\theta)$ is a locally conformally Hessian manifold, where $g$ is the corresponding metric to $h$ and $\theta = g^{\flat} E$.
\end{prop}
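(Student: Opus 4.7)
The plan is to verify the three defining conditions of a locally conformally Hessian manifold: $\nabla$ is flat, $\nabla g + \theta \otimes g$ is symmetric, and $d\theta = 0$. Flatness is immediate because $(M,\nabla)$ is an affine manifold by hypothesis. The other two conditions will be extracted from conditions (i) and (ii) of Definition \ref{JKV}, with the aid of the key identity
\begin{equation*}
(\nabla_X h)(g^\flat Y, g^\flat Z) = -(\nabla_X g)(Y,Z),
\end{equation*}
which follows from covariantly differentiating the relation $g^\flat \circ h^\sharp = \mathrm{id}$. This is the same identity that the excerpt already uses to show that, in the non-degenerate case, the Codazzi equation for $h$ is equivalent to symmetry of $\nabla g$.

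For the semi-Weyl condition, I would substitute $\alpha = g^\flat X$, $\beta = g^\flat Y$, $\gamma = g^\flat Z$ into condition (i). Under this substitution, $h^\sharp\alpha = X$, $h^\sharp\beta = Y$, the pairing $\langle \alpha, E \rangle$ becomes $g(X,E) = \theta(X)$ (and similarly for $\beta$), and $h(\beta,\gamma) = g(Y,Z)$. The displayed identity converts each $(\nabla h)$ term into $-(\nabla g)$, and after multiplying through by $-1$ condition (i) becomes exactly
\begin{equation*}
(\nabla_X g)(Y,Z) + \theta(X)\,g(Y,Z) = (\nabla_Y g)(X,Z) + \theta(Y)\,g(X,Z),
\end{equation*}
which is the semi-Weyl condition.

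For $d\theta = 0$, I would first expand using torsion-freeness of $\nabla$ together with $\theta(X) = g(E,X)$ to obtain
\begin{equation*}
d\theta(X,Y) = (\nabla_X g)(E,Y) + g(\nabla_X E,Y) - (\nabla_Y g)(E,X) - g(\nabla_Y E,X).
\end{equation*}
Applying the semi-Weyl condition to $(\nabla_X g)(E,Y)$ and $(\nabla_Y g)(E,X)$, the $(\nabla_E g)$ contributions cancel by symmetry of $\nabla_E g$ in its two arguments, and the $\theta\otimes g$ pieces cancel since $g(E,X) = \theta(X)$ and $g(E,Y) = \theta(Y)$. Hence $d\theta(X,Y)$ reduces to $g(\nabla_X E,Y) - g(\nabla_Y E,X)$. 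Translating condition (ii) through the same substitution $\beta = g^\flat Y$, $\gamma = g^\flat Z$ yields $g(\nabla_Y E, Z) = -(\nabla_E g)(Y,Z) + \theta(Y)\theta(Z)$, whose antisymmetrization in $(Y,Z)$ vanishes (again by symmetry of $\nabla_E g$). Therefore $d\theta = 0$.

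The only nontrivial step in this entire argument is the careful translation of conditions (i) and (ii) through the non-degenerate pairing $g^\flat = (h^\sharp)^{-1}$; beyond this, everything is bookkeeping. It is worth noting that condition (iii) ($\nabla_E E = 0$) appears not to be needed for this particular proposition.
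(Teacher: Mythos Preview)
Your proposal is correct and follows essentially the same route as the paper's proof: both reduce condition (i) to the semi-Weyl identity via the relation $(\nabla_X h)(g^\flat Y, g^\flat Z) = -(\nabla_X g)(Y,Z)$, then compute $d\theta(X,Y) = g(\nabla_X E, Y) - g(\nabla_Y E, X)$ using torsion-freeness and the semi-Weyl condition, and finally use (ii) to see that $g(\nabla_X E, Y)$ is symmetric in $X,Y$. The paper argues this last symmetry directly at the level of $(\nabla_E h)(\beta,\gamma)$ rather than first translating to $-(\nabla_E g)(Y,Z)$ as you do, and it applies the semi-Weyl identity with $Z=E$ rather than swapping to $\nabla_E g$, but these are purely cosmetic differences; your observation that condition (iii) is not used is also correct.
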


\begin{proof}
For $\alpha,\beta,\gamma$ in $\Omega^1(M)$, 
set $X=h^{\sharp}\alpha,\: Y=h^{\sharp}\beta,\: Z=h^{\sharp}\gamma$.  Then, by a straightforward calculation, the following equalities hold:
\begin{align*}
&(\nabla_{h^{\sharp}\alpha} h)(\beta,\gamma)-(\nabla_{h^{\sharp}\beta} h)(\alpha,\gamma)
=-(\nabla_X g)(Y,Z)+(\nabla_Y g)(X,Z), \\
&(\nabla_{h^{\sharp}\alpha} h)(\beta,\gamma)-\langle \alpha, E \rangle h(\beta,\gamma) 
-(\nabla_{h^{\sharp}\beta} h)(\alpha,\gamma)+\langle \beta, E \rangle h(\alpha,\gamma) \\
=&-(\nabla_X g)(Y,Z)-\theta(X)g(Y,Z)
+(\nabla_Y g)(X,Z)+\theta(Y)g(X,Z).
\end{align*}
This means that if $h$ is non-degenerate, (i) in Definition \ref{JKV} is equivalent to that $(M, \nabla, g ,\theta)$ is a semi-Weyl manifold.  Furthermore,
\begin{align*}
(d\theta)(X,Y)&=X(\theta(Y))-Y(\theta(X))-\theta([X,Y]) \\
&=X(g(Y,E))-Y(g(X,E))-g([X,Y],E) \\
&=X(g(Y,E))-Y(g(X,E))-g(\nabla_X Y-\nabla_Y X,E) \\
&=(\nabla_X g)(Y,E)+g(Y,\nabla_X E)-(\nabla_Y g)(X,E)-g(X,\nabla_Y E) \\
&=-\theta (X)g(Y,E)+\theta (Y)g(X,E)+g(Y,\nabla_X E)-g(X,\nabla_Y E) \\
&=g(Y,\nabla_X E)-g(X,\nabla_Y E). 
\end{align*}
From (ii) in Definition \ref{JKV}, 
$\langle \beta, \nabla_{h^{\sharp}\alpha} E \rangle$ is symmetric for any $\alpha,\beta$ in $\Omega^1(M)$.  
This means that $g(Y,\nabla_X E)$ is symmetric for any $X,Y$ in $\mathfrak{X}(M)$ since 
\begin{equation*}
\langle \beta, \nabla_{h^{\sharp}\alpha} E \rangle =g(Y,\nabla_X E).
\end{equation*}
Therefore, for any $X,Y$ in $\mathfrak{X}(M)$,
\begin{equation*}
(d\theta)(X,Y) =0.
\end{equation*}
\end{proof}


\bigskip

\address{
Department of Mathematics \\
Faculty of Science Division II \\ 
Tokyo University of Science\\ 
1-3 Kagurazaka, Shinjuku-ku\\
Tokyo 162-8601\\
Japan
}
{n-kimura@rs.tus.ac.jp}
\address{
Academic Support Center \\
Kogakuin University\\
2665-1 Nakano-cho, Hachioji\\
Tokyo 192-0015\\
Japan
}
{kt13676@ns.kogakuin.ac.jp}

\end{document}